\theoremstyle{plain}
\newtheorem{theorem}{\textbf{Theorem}}[section]
\newtheorem{lemma}[theorem]{\textbf{Lemma}}
\newtheorem{corollary}[theorem]{\textbf{Corollary}}
\theoremstyle{definition}
\newtheorem{definition}[theorem]{\textbf{Definition}}
\newtheorem{example}[theorem]{\textbf{Example}}
\theoremstyle{remark}
\newtheorem{remark}[theorem]{\textbf{Remark}}
\theoremstyle{remark}
\newtheorem{remarks}[theorem]{\textbf{Remarks}}
\numberwithin{equation}{section}
\numberwithin{subsection}{section}
\newcommand{\dimh}{\operatorname{dim_H}}
\newcommand{\dimp}{\operatorname{dim_P}}
\newcommand{\ydimb}{\operatorname{\overline{dim}_B}}
\newcommand{\ydiml}{\operatorname{\overline{dim}_{loc}}}
\newcommand{\adiml}{\operatorname{\underline{dim}_{loc}}}
\newcommand{\diml}{\operatorname{dim_{loc}}}
\newcommand{\R}{\mathbb{R}}
\newcommand{\N}{\mathbb{N}}
\newcommand{\Q}{\mathbf{Q}}
\renewcommand{\a}{\textit{\textbf{a}}}
\newcommand{\m}{\textit{\textbf{m}}}
\newcommand{\I}{\mathcal{I}}
\newcommand{\Po}{\mathcal{P}}
\newcommand{\IN}{I^{\N}}
\newcommand{\ii}{\textnormal{\texttt{i}}}
\newcommand{\jj}{\textnormal{\texttt{j}}}
\newcommand{\avlim}[1]{\lim_{#1\to\infty}\frac{1}{#1}}
\newcommand{\iin}[1]{\ii|_{#1}}
\title{Local dimensions of measures on infinitely generated
self-affine sets}
\author{Eino Rossi}
\address{Eino Rossi\\
Department of Mathematics and Statistics\\
P.O. Box 35 (MaD) FI-40014 University of Jyväskylä\\
Finland}
\email{eino.rossi@jyu.fi}
\thanks{The author would like to thank his supervisor Antti K\"aenm\"aki for
his help and guidance and the University of Jyv\"askyl\"a for financial
support}
\subjclass[2000]{Primary 28A80}
\keywords{Self-affine, infinite iterated function
system, local dimension}
\date{\today}
\begin{document}
\begin{comment}
\begin{abstract}
We show that for almost all translation vectors the canonical projection of an
ergodic
probability measure onto an infinitely generated self-affine set is exactly
dimensional. Furthermore the local dimension equals the
minimum of the Lyapunov dimension and the dimension of the space. We also give
an estimate, that holds for all translation vectors, with only assuming the
affine maps to be contractive.
\end{abstract}
\end{comment}
\begin{abstract}
We show the existence of the local dimension of an invariant
probability measure on an infinitely generated self-affine set, for almost all
translations. This implies that an ergodic probability measure is exactly
dimensional. Furthermore the local dimension equals the minimum of the local
Lyapunov
dimension and the dimension of the space.
\end{abstract}
\maketitle
\section{Introduction}
The \textit{upper} and \textit{lower local dimensions} of a locally finite Borel
measure $\mu$, denoted by $\ydiml(\mu,x)$ and $\adiml(\mu,x)$
respectively, are the $\limsup$ and $\liminf$ of the ratio
\[
\frac{\log\mu\left( B(x,r)\right)}{\log r},
\]
as $r\to 0$. When they agree, we say that the \textit{local dimension}, denoted
by $\diml(\mu,x)$, exists and equals to this common value. If the local
dimension is constant almost everywhere, we say that $\mu$ is \textit{exactly
dimensional}. The local dimension does not only give information about the
geometry of the measure, but also about the support of the measure. For example,
if the upper local dimension of $\mu$ is smaller than $t$ for all $x\in A$, then
the packing dimension of $A$ is at most $t$, see e.g. \cite[Proposition
2.3(d)]{Falconer3}.
%
\begin{comment}
Our main interest is to stydy the local dimensions of the canonical projection
$\pi\mu$ of an ergodic Borel probability measure $\mu$ onto a self-affine set.
In 2009, Feng and Hu \cite{Feng} showed that $\pi\mu$ is exactly dimensional if
the underlying iterated function system, IFS for short, is conformal. They also
showed that if the mappings of the IFS satisfy $f_i(x)=A_i x+a_i$ and the
matrices $A_i$ commute, then $\pi\mu$ is exactly dimensional. The general affine
case remained open. In 2011, Falconer and Miao \cite{FalcMiao} calculated the
local dimension in a specific affine case. They showed that $\pi\mu$ is exactly
dimensional for Lebesgue almost all translation vectors $a\in\R^{d\kappa}$,
assuming that $\mu$ is a Bernoulli measure and that $||A_i||<\frac{1}{2}$ for
all $i$, see \cite[Theorem 6.1]{FalcMiao}. It was noted by Barral and Feng
\cite{FengBarral}, that the proof of \cite[Theorem 4.3]{JPS} gives the same
result without the Bernoulli assumption, if $\dim_{LY}(\mu)$, the Lyapunov
dimension of $\mu$, is less than $1$. All these works assume that the IFS is
finitely generated.
\end{comment}

Our main interest is to study the local dimensions of the canonical projection
$\pi\mu$ of an invariant Borel probability measure $\mu$ onto a self-affine set.
In 2009, Feng and Hu \cite{Feng} showed that the local dimension of $\pi\mu$
exists almost everywhere if
the underlying iterated function system, IFS for short, is conformal. They also
showed that the local dimension exists if the mappings of the IFS satisfy
$f_i(x)=A_i x+a_i$ and the
matrices $A_i$ commute. When $\mu$ is ergodic, these results give that $\mu$ is
exactly dimensional. The general affine
case remained open. In 2011, Falconer and Miao \cite{FalcMiao}
calculated the
local dimension in a specific affine case. They showed that $\pi\mu$ is exactly
dimensional for Lebesgue almost all translation vectors $a\in\R^{d\kappa}$,
where $\kappa$ is the number of mappings in the IFS,
assuming that $\mu$ is a Bernoulli measure and that
$\sup_{i}||A_i||<\frac{1}{2}$, see \cite[Theorem 6.1]{FalcMiao}. By taking a
minor change in the proof of \cite[Theorem 4.3]{JPS} we can have the same
result for any ergodic measure. This was noted in a very restrictive case by
Barral and Feng in \cite[Theorem 2.6]{FengBarralv1} and giving the general proof
was one of our motivations at the beging of this work. In \cite{BarralFeng2013},
the published version of \cite{FengBarralv1}, it is mentioned that this
generalization is also known by the authors of \cite{JPS}. However the proof is
not written out. All the works mentioned here assume that the IFS is
finitely generated.

Our main result, Theorem \ref{mainthm}, generalizes the results mentioned above.
We show that even in the infinitely generated case, the local dimension of
an invariant Borel measure exists, assuming again that
$\sup_{i}||A_i||<\frac{1}{2}$.
As a corrollary we get that ergodic measures are exactly dimensional. We also
remark how to obtain estimates for the local dimensions that hold for
all translations, with only assuming that the mappings $A_i$ are
contractive. Finally, we make
remarks on the connections of our results to the dimensions of the limit set.

Let us now introduce some notation. Let $I$ be a finite or countable set. We
define
$I^*=\bigcup_{n=1}^{\infty}I^{n}$.
If $I$ is finite, we say that $\IN$ is \textit{finitely generated} and
otherwise $\IN$ is \textit{infinitely generated}. When $\ii\in I^*$, we denote
by
$\ii\jj$ the symbol obtained by juxtaposing $\ii$ and $\jj$. Furthermore, for
$\ii\in I^*$, we set $[\ii]=\{\ii\jj\colon\jj\in\IN\}$ and call this set a
\textit{cylinder} of $\ii$. When $\ii=(i_1,i_2,\ldots)$ we denote
$\iin{n}=(i_1,\ldots,i_{n})$. On the symbol space $\IN$ we consider the
\textit{left shift} $\sigma$, defined by
$\sigma(i_1,i_2,i_3,\ldots)=(i_2,i_3,\ldots)$ and study Borel
measures that are \textit{invariant} with respect to this shift, that is
$\mu(B)=\mu(\sigma^{-1}B)$ for all borel sets $B$. An invariant measure is
called \textit{ergodic},
if for all Borel sets $B$ with $B=\sigma^{-1}B$, we have $\mu(B)=0$ or
$\mu(B)=1$. We denote
the set of invariant and ergodic Borel
probability measures on $\IN$ by
$\mathcal{M}_{\sigma}(\IN)$ and $\mathcal{E}_{\sigma}(\IN)$ respectively.
Throughout the paper, $\mu$ denotes a Borel probability measure.
By $\pi\mu$, we mean the push-forward measure $\mu\circ \pi^{-1}$.

For each $i\in I$, we fix an invertible $d\times d$ matrix $A_i$ and a
translation vector $a_i\in\Q$, where $\Q=[-\frac{1}{2},\frac{1}{2}]^d$. Due to
Kolmogorov extension theorem $\Q^{\N}$ supports a natural probability measure
$\m=(\mathcal{L}^d|_{\Q})^{\N}$.
We assume that $\sup_{i\in I}||A_i||=\overline{\alpha}<1$ and consider the IFS
$\{f_i\}_{i\in I}$,
where
$f_i(x)=A_i x+a_i$, and the \textit{canonical projection} $\pi_{\a}:\IN\to\R^d$
defined by
$\{\pi_{\a}(\ii)\}=\bigcap_{n\in\N}f_{\iin{n}}(B(0,R))$,
where $f_{\iin{n}}=f_{i_1}\circ\dots\circ f_{i_n}$ and $R$ is so large that
$f_i(B(0,R))\subset B(0,R)$ for all $i\in I$. We
call $F_{\a}=\bigcup_{\ii\in\IN}\pi_{\a}(\ii)$ the \textit{limit set} of this
IFS. It is
not restrictive to assume that each $a_i$ is in the cube $\Q$, since this
is just a matter of scaling the limit set. This only exculudes
the case where $\sup_i|a_i|=\infty$.

The \textit{singular values}
$||A_{\iin{n}}||=\alpha_1(\iin{n})\geq\dots\geq\alpha_d(\iin{n})>0$ of
$A_{\iin{n}}=A_{i_1}\cdots A_{i_n}$ are the lengths of the principal semiaxis of
the ellipsoid
$A_{\iin{n}}(B(0,1))$. For $0\leq s<d$, the \textit{singular value function} is
defined as
\begin{equation*}
\phi^s(\iin{n})=
\alpha_{1}(\iin{n})\cdots\alpha_{k}(\iin{n})\alpha_{k+1}(\iin{n})^{s-k},
\end{equation*}
where $k$ is the integer part of $s$. When $s\geq d$, we set
$\phi^s(\iin{n})=\left(\alpha_{1}(\iin{n})\cdots\alpha_{d}(\iin{n})\right)^{
\nicefrac{s}{d
}}$.

For convenience, fix partitions
$\Po_n=\{[\ii]\}_{\ii\in I^n}$, and set
$H_{\mu}(\Po_n)=-\sum_{\ii\in I^n}\mu[\ii]\log\mu[\ii]$. 
\textit{Enropy} and \textit{energy} of $\mu\in\mathcal{M}_{\sigma}(\IN)$,
defined by
\[
h_{\mu}=-\avlim{n}H(\Po_n)\qquad\text{and}\qquad
\Lambda_{\mu}(s)=\avlim{n}\int_{\IN} \log\phi^s(\iin{n})\: d\mu
\]
respectively, are the basic tools in the study of ergodic measures in the field
of
iterated function systems. In order to work with invariant measures,
we need to localize these concepts. By Theorems \cite[Theorem 7 in section
2]{Parry} and \cite[Theorem 10.1]{Walters}, for
$\mu\in\mathcal{M}_{\sigma}(\IN)$, there exist $L^1(\mu)$ functions
$h_{\mu}(\ii)$ and
$\Lambda_{\mu}(s,\ii)$ so that
\begin{equation}
\label{SMsuberg}
h_{\mu}(\ii)=-\avlim{n} \log\mu[\iin{n}]\qquad\text{and}\qquad
\Lambda_{\mu}(s,\ii)=\avlim{n} \log\phi^s(\iin{n}),
\end{equation}
for $\mu$ almost all $\ii\in\IN$ and
\begin{equation}
\label{SMsubergI}
\int_{\IN} h_{\mu}(\ii)\: d\mu=h_{\mu}\qquad\text{and}\qquad
\int_{\IN} \Lambda_{\mu}(s,\ii)\: d\mu=\Lambda_{\mu}(s).
\end{equation}
Furthermore, for $\mu\in\mathcal{E}_{\sigma}(\IN)$, we
have $h_{\mu}(\ii)=h_{\mu}$ and $\Lambda_{\mu}(s,\ii)=\Lambda_{\mu}(s)$
for $\mu$ almost all $\ii\in\IN$. We call $h_{\mu}(\ii)$ the \textit{local
entropy} of $\mu$ at $\ii$ and $\Lambda_{\mu}(s,\ii)$ the \textit{local energy}
of $\mu$ at $\ii$. In order to use \cite[Theorem 7 in section 2]{Parry}, we
need to assume that $H(\Po_n)<\infty$ at some level $n$.
Since
$h_{\mu}=\avlim{n}  H(\Po_n)=\inf_{n\in\N}\frac{1}{n} H(\Po_n)$ we can
equivalently assume
that $h_{\mu}<\infty$.
\begin{comment}
If $\mu\in\mathcal{M}_{\sigma}(\IN)$, then we have
\begin{align}
h_{\mu}&=-\avlim{n} \log\mu[\iin{n}],\label{SH}\\
\Lambda_{\mu}(s)&=\avlim{n}\log\phi^s(\iin{n})\label{suberg}
\end{align}
for $\mu$ almost all $\ii\in\IN$. Equation \eqref{SH} is the ergodic theorem of
information theory, \cite[Theorem 7 in section 2]{Parry}, and equation
\eqref{suberg} follows by the sub-additive ergodic theorem, \cite[Theorem
10.1]{Walters}. In \eqref{SH} we need to assume that $-\sum_{i\in
I^n}\mu[i]\log\mu[i]<\infty$ at some level $n$. Since
$h_{\mu}=\inf-\frac{1}{n}\int_{\IN} \log\mu[\iin{n}]$ (see e.g. \cite{KReeve}),
we can equivalently
assume that $h_{\mu}<\infty$.
\end{comment}

We define the \textit{measure-theoretical pressure} function
$P_{\mu}(\:\cdot\:,\ii):[0,\infty]\to\R$ by
\[
P_{\mu}(s,\ii)=-\avlim{n}\log\frac{\mu[\iin{n}]}{\phi^s(\iin{n})},
\]
when $\ii$ is so that both equations in
\eqref{SMsuberg} hold. If $h_{\mu}<\infty$ the limit exists for
$\mu$ almost all $\ii\in\IN$. When
$h_{\mu}(\ii)<\infty$ or $\Lambda_{\mu}(s,\ii)>-\infty$, then $P_{\mu}(s,\ii)$
is just
$h_{\mu}(\ii)+\Lambda_{\mu}(s,\ii)$.\ifdraft{\textbf{ Selvää katsomallakin.}}{}

It is not yet said, that there exists $\ii\in\IN$, so that
$\avlim{n}\log\phi^s(\iin{n})$ exists for all $s$. Fortunately, this happens
for $\mu$
almost all $\ii\in\IN$. By repetitive use of the second equation in
\eqref{SMsuberg}, we get that for $\mu$ almost all $\ii\in\IN$,
the limit $\avlim{n}\log\alpha_l(\iin{n})$ exists for all $1\leq l \leq d$. We
call these values the \textit{Lyapunov exponents} of $\mu$ at $\ii$ and denote
them by $\lambda_l(\mu,\ii)$.
\begin{comment}
Indeed, by
\cite[Lemma 2.2]{JPS},
we have
that
$\lambda_l(\mu)$ are the exponents given by Oselede\v{c}'s multiplicative
ergodic theorem.
Anyway, just the existence of the limit suffices for us. 
\end{comment}
For $s<d$, it now
easily follows that
\begin{equation}
\label{energy}
\Lambda_{\mu}(s,\ii)=\lambda_1(\mu,\ii)+\dots+\lambda_k(\mu,\ii)+(s-k)\lambda_{
k+1 } (\mu,\ii) ,
\end{equation}
where $k$ is the integer part of $s$, with the interpretation that
$0\cdot(-\infty)=0$. If $s\geq d$, we get
$\Lambda_{\mu}(s,\ii)=\frac{s}{d}(\lambda_1(\mu,\ii)+\dots+\lambda_d(\mu,\ii))$.
From this
we see, that
$\Lambda_{\mu}(\:\cdot\:,\ii)$ is strictly decreasing function with
$\Lambda_{\mu}(0,\ii)=0$. Also we see that
$\Lambda_{\mu}(\:\cdot\:,\ii)$ has at most one point of discontinuity and at
this point it is
continuous from left. The point of discontinuity equals to
$\min\{k:\lambda_{k+1}(\mu,\ii)=-\infty\}$.

With the assumption $h_{\mu}<\infty$, we have that for $\mu$ almost all
$\ii\in\IN$, the equations in \eqref{SMsuberg} hold for all $s$. Also, the first
equation in \eqref{SMsubergI} gives
that $h_{\mu}(\ii)<\infty$ for $\mu$ almost all $\ii\in\IN$. In this light, we
give the following definition.
\begin{comment}
 \begin{definition}
\label{ldimension}
The Lyapunov dimension of $\mu$, denoted by $\dim_{LY}(\mu)$, is the unique $s$
for
which
$h_{\mu}+\Lambda_{\mu}(s)=0$.
\end{definition}
\end{comment}
\begin{definition}
\label{ldimension}
Let $\mu\in\mathcal{M}_{\sigma}(\IN)$ and $h_{\mu}<\infty$. When $\ii$ is so
that $h_{\mu}(\ii)<\infty$ and both equations in \eqref{SMsuberg} hold, the
\textit{local Lyapunov dimension}
of $\mu$ at
$\ii$, denoted by
$\dim_{LY}(\mu,\ii)$, is defined to be the
infimum of the numbers $s$, for which $P_{\mu}(s,\ii)<0$.
\end{definition}
We remark that, for ergodic $\mu$, the above functions
$h_{\mu}(\ii),\lambda_l(\mu,\ii),\Lambda_{\mu}(s,\ii),P_{\mu}(s,\ii)$ and
$\dim_{LY}(\mu,\ii)$ are constants for $\mu$ almost all $\ii$. In such case,
we use the notations $h_{\mu},\lambda_l(\mu),\Lambda_{\mu}(s),P_{\mu}(s)$ and
$\dim_{LY}(\mu)$ to emphasize the independence of $\ii$. We are now ready to
state our main result.
\begin{theorem}
\label{mainthm}
Assume that $\mu\in \mathcal{M}_{\sigma}(\IN)$, $h_{\mu}<\infty$,
$\sup_{i\in I}||A_i||<\frac{1}{2}$ and that there exists $s\in[0,\infty)$
so that $0>P_{\mu}(s,\ii)>-\infty$. Then
$\diml(\pi_{\a}\mu,\pi_{\a}(\ii))=\min\{d,\dim_{LY}(\mu,\ii)\}$
for $\mu$ almost all $\ii\in\IN$ and $\m$ almost all $\a\in\Q^{\N}$.
\end{theorem}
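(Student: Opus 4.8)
\emph{Strategy.} Restrict to $\ii$ in the full $\mu$-measure set on which $h_{\mu}(\ii)<\infty$, both limits in \eqref{SMsuberg} exist for every $s$, and $P_{\mu}(\:\cdot\:,\ii)$ takes a value in $(-\infty,0)$. By \eqref{energy} the function $s\mapsto\Lambda_{\mu}(s,\ii)$, and hence $s\mapsto P_{\mu}(s,\ii)=h_{\mu}(\ii)+\Lambda_{\mu}(s,\ii)$, is continuous and strictly decreasing on the interval where it is finite, with $P_{\mu}(0,\ii)=h_{\mu}(\ii)\geq 0$; together with the stated hypothesis this forces $\dim_{LY}(\mu,\ii)$ to be a finite number with $P_{\mu}(\dim_{LY}(\mu,\ii),\ii)=0$, while $P_{\mu}(s,\ii)>0$ for $s<\dim_{LY}(\mu,\ii)$ and $P_{\mu}(s,\ii)<0$ for $s>\dim_{LY}(\mu,\ii)$. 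Put $\gamma(\ii)=\min\{d,\dim_{LY}(\mu,\ii)\}$. Since $\adiml\leq\ydiml$ always, the theorem follows from two one-sided bounds: (A) $\ydiml(\pi_{\a}\mu,\pi_{\a}(\ii))\leq\gamma(\ii)$ for every $\a\in\Q^{\N}$ and $\mu$-a.e.\ $\ii$; and (B) $\adiml(\pi_{\a}\mu,\pi_{\a}(\ii))\geq\gamma(\ii)$ for $\mu$-a.e.\ $\ii$ and $\m$-a.e.\ $\a\in\Q^{\N}$. Two remarks are used throughout. First, $\pi_{\a}(\mathtt{w}\jj)=f_{\mathtt{w}}(\pi_{\a}(\jj))$ for $\mathtt{w}\in I^{*}$, so $\pi_{\a}([\mathtt{w}])\subseteq f_{\mathtt{w}}(B(0,R))$, an ellipsoid with semi-axes $R\alpha_{1}(\mathtt{w})\geq\dots\geq R\alpha_{d}(\mathtt{w})$; consequently $\pi_{\a}\mu(f_{\mathtt{w}}(B(0,R)))\geq\mu[\mathtt{w}]$, and for $\mathtt{w}=\iin{n}$ this ellipsoid contains $\pi_{\a}(\ii)$. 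Second, $\int|y-x|^{-t}\,d\nu(y)<\infty$ implies $\adiml(\nu,x)\geq t$, since $r^{-t}\nu(B(x,r))\leq\int_{B(x,r)}|y-x|^{-t}\,d\nu(y)\to 0$.

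\emph{The upper estimate (A).} This is the estimate valid for all translation vectors (needing only contractivity), and its proof adapts the covering argument of \cite[Theorem 2.8]{Feng}; the advantage of a single IFS over a product system is that one can dispense with conditional measures. Fix $s>\gamma(\ii)$. For a small scale $r$, comparing $r$ with the singular values $\alpha_{1}(\iin{n}),\dots,\alpha_{d}(\iin{n})$ picks out a level together with an ``approximate cube'' $Q_{r}$ satisfying $\pi_{\a}(\ii)\in Q_{r}\subseteq B(\pi_{\a}(\ii),Cr)$ and $\diam Q_{r}$ of order $r$, obtained by truncating the ellipsoid $f_{\iin{n}}(B(0,R))$ by finitely many slabs of width of order $r$. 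Collecting the cylinders whose $\pi_{\a}$-images lie in $Q_{r}$, and using $\pi_{\a}\mu(f_{\mathtt{w}}(B(0,R)))\geq\mu[\mathtt{w}]$ together with $\mu[\iin{n}]\geq\phi^{s}(\iin{n})$ for all large $n$ (valid when $s\leq d$, as then $s>\dim_{LY}(\mu,\ii)$ so $P_{\mu}(s,\ii)<0$; the remaining case $\gamma(\ii)=d$ is handled via a volume count for $f_{\iin{n}}(B(0,R))$ combined with a Borel--Cantelli argument over dyadic scales), one bounds $\pi_{\a}\mu(Q_{r})$ below by a quantity of order $r^{s}$. Hence $\pi_{\a}\mu(B(\pi_{\a}(\ii),Cr))\gtrsim r^{s}$ for all small $r$, so $\ydiml(\pi_{\a}\mu,\pi_{\a}(\ii))\leq s$; letting $s\downarrow\gamma(\ii)$ along a sequence gives (A), with $\a$ entering only through the conjugacy $\pi_{\a}(\mathtt{w}\jj)=f_{\mathtt{w}}(\pi_{\a}(\jj))$.

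\emph{The lower estimate (B).} Fix a rational $t\in[0,d)$ and work on $X_{t}=\{\ii:t<\dim_{LY}(\mu,\ii)\}$. The crucial input, and the only place the hypothesis $\sup_{i}||A_{i}||<\frac{1}{2}$ is used, is the transversality estimate: there is $C_{t}<\infty$ with
\[
\int_{\Q^{\N}}\frac{d\m(\a)}{|\pi_{\a}(\ii)-\pi_{\a}(\jj)|^{t}}\leq\frac{C_{t}}{\phi^{t}(\iin{n})}\qquad\text{whenever }\iin{n}=\jjn{n}\text{ and }i_{n+1}\neq j_{n+1}.
\]
It follows by writing $\pi_{\a}(\ii)-\pi_{\a}(\jj)=A_{\iin{n}}w$, where $w$ depends on the coordinate $a_{i_{n+1}}$ of $\a$ in an affine fashion whose linear part is boundedly invertible and whose constant part has bounded norm (both facts being precisely what $\sup_{i}||A_{i}||<\frac{1}{2}$ guarantees), and then, after a linear change of variables, applying the standard estimate $\int_{E}|y-y_{0}|^{-t}\,dy\lesssim\alpha_{1}(\iin{n})\cdots\alpha_{d}(\iin{n})\,\phi^{t}(\iin{n})^{-1}$, valid for $t<d$ uniformly in $y_{0}$, where $E=A_{\iin{n}}(B(0,R))$ (cf.\ \cite{FalcMiao,JPS}). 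Summing over the first level $n$ at which $\ii$ and $\jj$ disagree, Fubini gives, for each fixed $\ii$,
\[
\int_{\Q^{\N}}\int_{\IN}\frac{d\mu(\jj)}{|\pi_{\a}(\ii)-\pi_{\a}(\jj)|^{t}}\,d\m(\a)\leq C_{t}\sum_{n=0}^{\infty}\frac{\mu[\iin{n}]}{\phi^{t}(\iin{n})}.
\]
For $\ii\in X_{t}$ one has $P_{\mu}(t,\ii)>0$, so by \eqref{SMsuberg} the summands decay exponentially and the series converges; hence for every $\ii\in X_{t}$ the $t$-energy of $\pi_{\a}\mu$ at $\pi_{\a}(\ii)$ is finite for $\m$-a.e.\ $\a$, and therefore $\adiml(\pi_{\a}\mu,\pi_{\a}(\ii))\geq t$. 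By Fubini on $X_{t}\times\Q^{\N}$ this holds for $\mu$-a.e.\ $\ii\in X_{t}$ and $\m$-a.e.\ $\a$; intersecting over a sequence of rationals dense in $[0,d)$ yields $\adiml(\pi_{\a}\mu,\pi_{\a}(\ii))\geq\min\{d,\dim_{LY}(\mu,\ii)\}=\gamma(\ii)$ for $\mu$-a.e.\ $\ii$ and $\m$-a.e.\ $\a$, which is (B).

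\emph{Conclusion and main difficulty.} Combining (A) and (B), $\diml(\pi_{\a}\mu,\pi_{\a}(\ii))$ exists and equals $\min\{d,\dim_{LY}(\mu,\ii)\}$ for $\mu$-a.e.\ $\ii$ and $\m$-a.e.\ $\a$; for ergodic $\mu$ this is a.e.\ constant, so $\pi_{\a}\mu$ is exactly dimensional. The main obstacle is (A): Feng's approximate-cube covering has to be carried out for an \emph{infinitely} generated IFS, keeping all constants uniform in $i\in I$ and the relevant cylinder counts finite, and for a merely invariant measure, so that the local quantities $h_{\mu}(\ii)$, $\lambda_{l}(\mu,\ii)$, $P_{\mu}(s,\ii)$ genuinely vary with $\ii$; with that in place, the combination of the various ``$\mu$-a.e.\ $\ii$'' and ``$\m$-a.e.\ $\a$'' statements is a routine Fubini argument.
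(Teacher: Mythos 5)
Your decomposition into the two one-sided estimates is exactly the paper's (Theorems \ref{lowerestimate} and \ref{upperestimate}), and your lower bound (B) is essentially the paper's proof of Theorem \ref{lowerestimate}: Falconer's transversality lemma (with Solomyak's improvement of the norm bound from $\tfrac13$ to $\tfrac12$) combined with the energy characterisation of the lower local dimension. Your variant --- fixing a rational exponent $t$, working pointwise in $\ii$ on $X_t$ so that $\sum_n\mu[\iin{n}]/\phi^t(\iin{n})$ converges because $P_\mu(t,\ii)>0$, then applying Fubini on $X_t\times\Q^{\N}$ and intersecting over a countable dense set of exponents --- is a legitimate alternative to the paper's Egoroff argument and works.

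The upper bound (A) is where your proposal has a genuine gap. You set up the covering correctly: at scale $r\approx\alpha_{k+1}(\iin{n})$ the ellipsoid $f_{\iin{n}}(B(0,R))$ splits into $N(\iin{n})\lesssim\alpha_1(\iin{n})\cdots\alpha_k(\iin{n})\alpha_{k+1}(\iin{n})^{-k}$ approximate cubes of side $\approx r$, and $\mu[\iin{n}]\geq\phi^s(\iin{n})$ for large $n$ gives $\mu[\iin{n}]/N(\iin{n})\gtrsim\alpha_{k+1}(\iin{n})^{s}\approx r^{s}$. But you then assert that the particular cube $Q_r$ containing $\pi_{\a}(\ii)$ satisfies $\pi_{\a}\mu(Q_r)\gtrsim r^{s}$, and nothing in your argument delivers this: for a fixed $\ii$ the mass $\mu[\iin{n}]$ carried by the ellipsoid could be concentrated in a single one of the $N(\iin{n})$ boxes far from $\pi_{\a}(\ii)$ at every scale, and ``collecting the cylinders whose images lie in $Q_r$'' gives no a priori lower bound on their total measure. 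The missing ingredient is the pigeonhole-plus-Borel--Cantelli step (the sets $A_n^j$ in the paper's proof of Theorem \ref{upperestimate}): the set of $\ii$ whose box captures less than $2^{-n/j}$ times the average mass $\mu[\iin{n}]/N(\iin{n})$ has $\mu$-measure at most $2^{-n/j}$, which is summable, so for $\mu$-a.e.\ $\ii$ and all large $n$ one has $\mu(Q_n(\ii))\geq 2^{-n/j}\mu[\iin{n}]/N(\iin{n})$, and the subexponential loss is absorbed by letting $j\to\infty$. This is the heart of the upper estimate and is exactly what forces the conclusion to hold only for $\mu$-a.e.\ $\ii$; you invoke Borel--Cantelli only for the side case $\gamma(\ii)=d$. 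Two smaller points: attributing the bound to the covering argument of \cite[Theorem 2.8]{Feng} is off --- that argument yields the weaker estimate $h^{\pi}_{\mu}(\ii)/(-E_{\mu}(\log\alpha_1(\iin{1})|\I))$ of Theorem \ref{fengtheorem}, not $\min\{d,\dim_{LY}(\mu,\ii)\}$ --- and passing from the sequence of scales $\alpha_{k+1}(\iin{n})$ to arbitrary $r$ needs $\lambda_{k+1}(\mu,\ii)>-\infty$, which is precisely where the hypothesis $0>P_{\mu}(s,\ii)>-\infty$ enters; your sketch does not address this.
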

We  only need the assumption $0>P_{\mu}(s,\ii)>-\infty$ in the proof of the
upper bound to ensure that $\lambda_{k+1}(\mu,\ii)>-\infty$, where $k$ is the
integer part of $\dim_{LY}(\mu,\ii)$.
\begin{comment}
\begin{corollary}
\label{maincor}
Assume that $\mu\in \mathcal{E}_{\sigma}(\IN)$, $h_{\mu}<\infty$ and
$\sup_{i\in I}\alpha_1(i)<\frac{1}{2}$.
Then we have that $\diml(\pi_{\a}\mu,\pi_{\a}(\ii))=\min\{d,\dim_{LY}(\mu)\}$
for all $\ii\in\IN$ and $\m$ almost all $\a\in\Q^{\N}$.
\end{corollary}
\end{comment}

\section{Local dimensions of invariant measures}
In this section we prove Theorem \ref{mainthm}. The proof is divided into upper
and
lower estimates, namely to Theorems \ref{lowerestimate} and
\ref{upperestimate}. We remark that Theorem \ref{lowerestimate} was proven in
\cite[Proposition 4.4]{JPS} for an ergodic measure on a finitely generated
affine IFS.
\begin{theorem}
\label{lowerestimate}
Assume that $\mu\in \mathcal{M}_{\sigma}(\IN)$, $h_{\mu}(\ii)<\infty$ and
$\sup_{i\in I}\alpha_1(i)<\frac{1}{2}$.
Then we have that
$\adiml(\pi_{\a}\mu,\pi_{\a}(\ii))\geq\min\{d,\dim_{LY}(\mu,\ii)\}$ for $\mu$
almost all $\ii\in\IN$ and $\m$ almost all $\a\in\Q^{\N}$.
\end{theorem}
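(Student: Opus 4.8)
The plan is to obtain a lower bound for $\adiml(\pi_{\a}\mu,\pi_{\a}(\ii))$ by estimating, from below, the value of $-\log(\pi_{\a}\mu)(B(\pi_{\a}(\ii),r))$ in terms of $-\log r$. The natural device is a transversality/Fubini argument over the translation parameters, in the spirit of \cite[Theorem 4.3]{JPS} and \cite[Theorem 6.1]{FalcMiao}. Fix $s<\min\{d,\dim_{LY}(\mu,\ii)\}$; it suffices to show $\adiml(\pi_{\a}\mu,\pi_{\a}(\ii))\geq s$ for $\mu$-a.e.\ $\ii$ and $\m$-a.e.\ $\a$. By the definition of the local Lyapunov dimension and of $P_\mu$, the choice $s<\dim_{LY}(\mu,\ii)$ gives $P_\mu(s,\ii)\geq 0$, i.e.\ $\liminf_n \tfrac1n\log\big(\phi^s(\iin n)/\mu[\iin n]\big)\leq 0$, equivalently $\mu[\iin n]\leq \phi^s(\iin n)e^{o(n)}$ along a subsequence; more usefully, for every $\varepsilon>0$ and all large $n$, $\mu[\iin n]\leq \phi^{s}(\iin n)\,e^{\varepsilon n}$ holds for $\mu$-a.e.\ $\ii$ (using the $\liminf$ form of \eqref{SMsuberg} for both $h_\mu(\ii)$ and $\Lambda_\mu(s,\ii)$, valid since $h_\mu(\ii)<\infty$).

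The key estimate is a bound on how many cylinders of a given generation can have their $\pi_{\a}$-images close to a fixed point, on average over $\a$. Concretely, I would fix a generation-$n$ "stopping set" — e.g.\ the collection of $\ii$ restricted to length $n$, or a Moran-type cover where $\phi^s(\iin n)\approx r$ — and estimate
\[
\int_{\Q^{\N}} \#\{\jj\in I^n : |\pi_{\a}(\ii\jj')-\pi_{\a}(\ii\jj)|\leq c\,\alpha_1(\iin n)\ \text{for a fixed }\jj'\}\ d\m(\a)
\]
using that, after conditioning on the first $n$ symbols, the map $\a\mapsto \pi_{\a}(\ii\jj)$ distributes the translation increments so that two distinct cylinders land within distance $r$ with probability controlled by $r^d/\phi^s$-type quantities; here the hypothesis $\alpha_1(i)<\frac12$ enters exactly as in \cite{FalcMiao} to guarantee that the "doubly exponential" sums $\sum_{\jj} \alpha_1(\jj)^{-s}\cdots$ converge and that the induced densities are bounded — this is where infinite generation must be handled carefully, since one must sum over the countable alphabet and the assumption $\sup_i\alpha_1(i)=\overline\alpha<1$ together with $\overline\alpha<\tfrac12$ keeps everything summable. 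From such an $L^1(\m)$ bound, a Borel–Cantelli / Fubini argument yields that for $\m$-a.e.\ $\a$ and $\mu$-a.e.\ $\ii$, the ball $B(\pi_{\a}(\ii),r)$ meets at most $e^{\varepsilon n}$ cylinders of the relevant generation, whence $(\pi_{\a}\mu)(B(\pi_{\a}(\ii),r))\leq e^{\varepsilon n}\max_{\jj}\mu[\ii\jj]\leq e^{2\varepsilon n}\,r^{s}$ after translating the cylinder-scale $\phi^s(\iin n)\approx \alpha_1(\iin n)^s\gtrsim r^s$ into powers of $r$, using \eqref{energy} and that $s<d$.

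Taking $-\log$, dividing by $-\log r$, and letting $r\to0$ (so $n\to\infty$) gives $\adiml(\pi_{\a}\mu,\pi_{\a}(\ii))\geq s-C\varepsilon$; letting $\varepsilon\to0$ and then $s\uparrow\min\{d,\dim_{LY}(\mu,\ii)\}$ along a countable sequence finishes the proof, after discarding a $\mu\times\m$-null set. The main obstacle I anticipate is the transversality estimate in the infinitely generated setting: one must show the relevant series of integrals converges uniformly enough in $\ii$, and that the "number of close cylinders" bound survives averaging over the infinite product measure $\m$ on $\Q^{\N}$ rather than the finite-dimensional $\Q^{d\kappa}$ used in \cite{FalcMiao}; controlling this requires splitting the sum over $\jj\in I^n$ according to the singular values $\phi^s(\jj)$ and using $h_\mu(\ii)<\infty$ to ensure the measure-weighted count does not blow up. Everything else — the passage from cylinder scales to Euclidean balls, the Borel–Cantelli step, and the final $\liminf$ computation — is routine given this estimate.
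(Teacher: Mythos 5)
Your overall strategy (integrate over the translation parameters and use a transversality estimate in the spirit of Falconer, JPS and Falconer--Miao) is the right one, and your reduction to the inequality $\mu[\iin{n}]\leq \phi^{s}(\iin{n})e^{\varepsilon n}$ for large $n$ matches the first step of the paper (which makes it uniform via Egoroff). But the paper then runs a \emph{potential-theoretic} argument: it bounds the energy $\int_{\Q^{\N}}\int\int |\pi_{\a}(\ii)-\pi_{\a}(\jj)|^{-\gamma}\,d\mu(\jj)\,d\mu(\ii)\,d\m(\a)$ using the precise transversality lemma $\int_{\Q}|\pi_{\a}(\ii)-\pi_{\a}(\jj)|^{-\gamma}\,d\mathcal{L}^{d}(a_1)\leq c\,\phi^{\gamma}(\ii\wedge\jj)^{-1}$ (Falconer's Lemma 3.1, with Solomyak's improvement of the norm bound from $\tfrac13$ to $\tfrac12$), sums over cylinders using $\phi^{\theta}/\phi^{\gamma}\leq 2^{n(\gamma-\theta)}$, and concludes from finiteness of $\int|\pi_{\a}(\ii)-\pi_{\a}(\jj)|^{-\gamma}d\mu(\jj)$ that $\pi_{\a}\mu(B(\pi_{\a}(\ii),r))\leq M(\ii)r^{\gamma}$ for \emph{all} $r>0$ at once. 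Your counting/Borel--Cantelli variant never states the transversality estimate it relies on, and as written it has gaps that are not routine to fill.

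Concretely: (i) the quantity $\#\{\jj\in I^{n}:|\pi_{\a}(\ii\jj')-\pi_{\a}(\ii\jj)|\leq c\,\alpha_1(\iin{n})\}$ is an \emph{unweighted} count over the countable set $I^{n}$, and its expectation over $\a$ will typically be infinite; to make the sum converge you must weight each $\jj$ by $\mu[\jj]$, at which point you are computing exactly the expected ball measure, i.e.\ you have rederived the energy integral the paper uses. (ii) The step $\phi^{s}(\iin{n})\approx\alpha_1(\iin{n})^{s}$ is false for genuinely affine maps: one only has $\alpha_{k+1}(\iin{n})^{s}\leq\phi^{s}(\iin{n})\leq\alpha_1(\iin{n})^{s}$, and the anisotropy of the cylinders (long thin ellipsoids) is precisely what makes the ``which cylinders meet a ball of radius $r$'' geometry delicate; your proposal does not specify a stopping rule that handles this. (iii) Even granting the count, your argument controls $\pi_{\a}\mu(B(\pi_{\a}(\ii),r))$ only along the sequence of radii tied to generations; since the alphabet is infinite, $\inf_{i}\alpha_d(i)=0$ and consecutive cylinder scales need not be comparable, so radii falling in the gaps are not covered, and a lower bound on $\adiml$ requires all $r\to 0$. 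The paper's Frostman-type conclusion $\pi_{\a}\mu(B(x,r))\leq M r^{\gamma}$ holds uniformly in $r$ and sidesteps all three issues; I would recommend reworking your argument into that form.
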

\begin{proof}
Assume first that $\dim_{LY}(\mu,\ii)\leq d$.
For arbitrary $\varepsilon>0$,
we choose $\gamma(\ii)=\dim_{LY}(\mu,\ii)-2\varepsilon$ and
$\theta(\ii)=\dim_{LY}(\mu,\ii)-\varepsilon$ Since $P_{\mu}(\:\cdot\:,\ii)$ is
strictly decreasing, we
find $\varepsilon'>0$, so that $\Lambda_{\mu}(\theta(\ii),\ii)\geq
-h_{\mu}(\ii)+2\varepsilon'$. By Egoroff's
theorem, for each $\delta>0$ there is a measurable set $H_{\delta}\subset\IN$
and integer
$N_{\delta}$, such that $\mu(\IN\setminus H_{\delta})<\delta$ and
\[
\frac{1}{n}\log \mu[\iin{n}]\leq -h_{\mu}(\ii)+\varepsilon'\leq
\Lambda_{\mu}(\theta(\ii),\ii)-\varepsilon'\leq
\frac{1}{n}\log\phi^{\theta(\ii)}(\iin{n})
\]
for all $n\geq N_{\delta}$ and $\ii\in H_{\delta}$. Therefore we find a constant
$c'>0$, independent of $\ii$,
so that
\begin{equation}
\label{estimate1}
\mu[\iin{n}]\leq c' \phi^{\theta(\ii)}(\iin{n})
\end{equation}
for all $n\in\N$ and $\ii\in H_{\delta}$. Next we consider the integral
\[
\int_{\Q^{\N}}\frac{d\m(\a)}{|\pi_{\a}(\ii)-\pi_{\a}(\jj)|^{\gamma(\ii)}}
=\int_{\Q^{\N}}\int_{\Q}\frac{d\mathcal{L}^{d}(a_1)}{|\pi_{\a}(\ii)-\pi_{\a}
(\jj)|^ { \gamma(\ii) } }d\m(\a'),
\]
where $\a=(a_1,\a')\in\Q^{\N}$. We can make the change of variable in the inner
integral as in \cite[Lemma 3.1]{Falconer88}. By using this Lemma with Fubini's
theorem, and then inequality \eqref{estimate1} and the
properties of the singular value function, we get
\begin{align*}
\int_{\Q^{\N}}\int_{H_{\delta}}\int_{\IN}\frac{d\mu(\jj)d\mu(\ii)d\m(\a)}{|\pi_{
\a}
(\ii)-\pi_{\a}
(\jj)|^{\gamma(\ii)}}
&\leq c \int_{H_{\delta}}\int_{\IN}
(\phi^{\gamma(\ii)}(\ii\wedge\jj))^{-1}d\mu(\jj)d\mu(\ii)\\
&\leq c \int_{H_{\delta}}
\sum_{n=1}^{\infty}(\phi^{\gamma(\ii)}(\iin{n}))^{-1}\mu[\iin{n}] d\mu(\ii)\\
&\leq cc' \int_{H_{\delta}}
\sum_{n=1}^{\infty}(\phi^{\gamma(\ii)}(\iin{n}))^{-1}\phi^{\theta(\ii)}(\iin{n})
d\mu(\ii)\\
&\leq cc' \int_{H_{\delta}}
\sum_{n=1}^{\infty}2^{n(\gamma(\ii)-\theta(\ii))}
d\mu(\ii)\\
&\leq cc' \sum_{n=0}^{\infty} 2^{-n\varepsilon}\int_{H_{\delta}}d\mu(\ii)
<\infty,
\end{align*}
where $\ii\wedge\jj=\iin{\min\{k-1:i_k\neq j_k\}}$ and $c$ is the constant from
\cite[Lemma 3.1]{Falconer88}, independent of
$\ii$ and $\jj$. Originally, the bound
of the
norms of the linear maps in \cite[Lemma 3.1]{Falconer88} is $\frac{1}{3}$, but
by
\cite[Proposition 3.1]{Solo}, $\frac{1}{2}$ suffices. Now we have that
\begin{equation}
\label{almosta}
\int_{H_{\delta}}\int_{\IN}\frac{d\mu(\jj)d\mu(\ii)}{|\pi_{\a}(\ii)-\pi_{\a}
(\jj)|^{\gamma(\ii)
}}<\infty
\end{equation}
for $\m$ almost all $\a\in\Q^{\N}$. Next we fix $\a$ so that \eqref{almosta}
holds. We deduce that the integral
$\int_{\IN}|\pi_{\a}(\ii)-\pi_{\a}(\jj)|^{-\gamma(\ii)}\:d\mu(\jj)$ is finite
for $\mu$ almost
all $\ii\in H_{\delta}$ and so we find constants $M(\ii)$ for $\mu$ almost
every $\ii\in
H_{\delta}$, so that
$\int_{\IN}|\pi_{\a}(\ii)-\pi_{\a}(\jj)|^{-\gamma(\ii)}\:d\mu(\jj)<M(\ii)$.
This implies that $\pi_{\a}\mu(B(\pi_{\a}(\ii),r))\leq r^{\gamma(\ii)}M(\ii)$
for all $r>0$ and for $\mu$ almost
all $\ii\in H_{\delta}$.

We have obtained that $\adiml(\pi_{\a}\mu,\pi_{\a}(\ii))\geq \gamma(\ii)$ for
$\mu$ almost all
$\ii\in H_{\delta}$ and $\m$ almost all $\a\in \Q^{\N}$. Since
$\delta$ was arbitrary, this also holds for $\mu$ almost all $\ii\in \IN$.

If $\dim_{LY}(\mu,\ii)>d$, then we get the proof by choosing
$\theta(\ii)=d$ and $\gamma(\ii)=d-\varepsilon$.
\end{proof}

\begin{theorem}
\label{upperestimate}
If $\mu\in \mathcal{M}_{\sigma}(\IN)$, $h_{\mu}<\infty$ and
$\Lambda_{\mu}(s,\ii)>-\infty$ for some $s>\dim_{LY}(\mu,\ii)$,
then $\ydiml(\pi_{\a}\mu,\pi_{\a}(\ii))\leq\min\{d,\dim_{LY}(\mu,\ii)\}$ for
$\mu$
almost all $\ii\in \IN$ and for all $\a\in\Q^{\N}$.
\end{theorem}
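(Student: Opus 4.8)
The plan is to establish the reverse inequality to Theorem~\ref{lowerestimate}, i.e.\ to bound $\pi_\a\mu(B(\pi_\a(\ii),r))$ \emph{from below}. If $\dim_{LY}(\mu,\ii)\ge d$, the asserted bound $\ydiml(\pi_\a\mu,\pi_\a(\ii))\le d$ is just the elementary fact that the upper local dimension of a Radon measure on $\R^d$ is at most $d$ almost everywhere (a Besicovitch covering argument), so assume $\dim_{LY}(\mu,\ii)<d$, put $s=\dim_{LY}(\mu,\ii)$ and let $k$ be the integer part of $s$. Since $r\mapsto\pi_\a\mu(B(\pi_\a(\ii),r))$ is non-decreasing, it suffices to show, for $\mu$ almost every $\ii$ and every $\a$, that $\pi_\a\mu(B(\pi_\a(\ii),r_n))\ge r_n^{\,s+\varepsilon}$ for each $\varepsilon>0$ and all large $n$, where $r_n:=\alpha_{k+1}(\iin n)$. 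First I would collect, for $\mu$ almost every $\ii$, the consequences of the hypotheses: by \eqref{SMsuberg} the limits defining $h_\mu(\ii)$ and $\Lambda_\mu(\,\cdot\,,\ii)$ exist for all exponents and $h_\mu(\ii)<\infty$; since $\sup_i\|A_i\|<1$ the sequence $(r_n)$ decreases to $0$; and the assumption that $\Lambda_\mu(s',\ii)>-\infty$ for some $s'>s$ keeps $s$ strictly below the only possible point of discontinuity of $\Lambda_\mu(\,\cdot\,,\ii)$ discussed after \eqref{energy}, so that $P_\mu(\,\cdot\,,\ii)$ is finite, continuous and strictly decreasing at $s$; hence $P_\mu(s,\ii)=0$ and $\lambda_{k+1}(\mu,\ii)>-\infty$. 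This last finiteness gives $\tfrac1n\log r_n\to\lambda_{k+1}(\mu,\ii)$ with sub-exponential gaps $r_{n+1}/r_n=e^{o(n)}$, which is what makes both the reduction to the scales $r_n$ and the final interpolation back to all small $r$ legitimate.

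The geometric step: $\pi_\a([\iin n])$ is contained in the ellipsoid $f_{\iin n}(B(0,R))$, whose $j$-th principal semiaxis has length comparable to $\alpha_j(\iin n)$; the first $k$ of these are $\gtrsim r_n$ and the remaining $d-k$ are $\lesssim r_n$. Cutting the ellipsoid by the coordinate slabs of width $r_n$ along its $k$ longest principal directions produces a family of $N_n\asymp\alpha_1(\iin n)\cdots\alpha_k(\iin n)/r_n^{\,k}=\phi^{k}(\iin n)/r_n^{\,k}$ approximate cubes, each of diameter at most $Cr_n$ and hence lying in a ball of radius $Cr_n$ about each of its points. Letting $Q_n(\ii)$ be the cube containing $\pi_\a(\ii)$, we get $Q_n(\ii)\subseteq B(\pi_\a(\ii),Cr_n)$, so $\pi_\a\mu(B(\pi_\a(\ii),Cr_n))\ge\pi_\a\mu(Q_n(\ii))$, and everything reduces to a lower bound for $\pi_\a\mu(Q_n(\ii))$. (Overlaps among the sets $f_i(B(0,R))$ can only increase $\pi_\a\mu(Q_n(\ii))$, so they are harmless; when $k=0$ this step degenerates to $\pi_\a\mu(B(\pi_\a(\ii),2R\alpha_1(\iin n)))\ge\mu[\iin n]$.)

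The crux is this last bound, since $\mu$ need not distribute itself evenly among the $N_n$ cubes sitting inside $[\iin n]$. The target is the $\mu$-a.e.\ pointwise estimate
\[
\limsup_{n\to\infty}\frac1n\bigl(-\log\pi_\a\mu(Q_n(\ii))\bigr)\ \le\ h_\mu(\ii)+\Lambda_\mu(k,\ii)-k\,\lambda_{k+1}(\mu,\ii)\ =\ -\,s\,\lambda_{k+1}(\mu,\ii),
\]
the equality being $0=P_\mu(s,\ii)=h_\mu(\ii)+\Lambda_\mu(s,\ii)$ with $\Lambda_\mu(s,\ii)$ expanded through \eqref{energy}. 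The \emph{averaged} form of the inequality is elementary: the cubes partition $\pi_\a([\iin n])$, so concavity of $t\mapsto-\log t$ bounds $\int_{[\iin n]}(-\log\pi_\a\mu(Q_n(\ii)))\,d\mu$ by $\mu[\iin n](\log N_n-\log\mu[\iin n])$, and summing over $\iin n\in I^n$, dividing by $n$ and letting $n\to\infty$ identifies the limit via \eqref{SMsubergI} and $\log\alpha_{k+1}(\iin n)=\log\phi^{k+1}(\iin n)-\log\phi^{k}(\iin n)$. Upgrading this to the stated pointwise bound is where I would follow the proof of \cite[Theorem 2.8]{Feng}: restricting to a geometric subsequence of scales one expresses $-\log\pi_\a\mu(Q_n(\ii))$, up to an $o(n)$ error, as an ergodic sum of the conditional informations $I_\mu$ of the refining sequence of cube partitions, and applies the Shannon--McMillan--Breiman and subadditive ergodic theorems; here $h_\mu<\infty$ and $\lambda_{k+1}(\mu,\ii)>-\infty$ are exactly what guarantee integrability of the functions involved, and \eqref{SMsuberg} turns the ergodic averages into the fibrewise quantities above. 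Because we are dealing with a single IFS rather than a product system, no passage to conditional measures on a second coordinate is needed --- this is the simplification over \cite{Feng} advertised in the introduction, and I expect carrying out this reduction carefully to be the main effort of the proof.

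Finally I would assemble the conclusion. For $\mu$ almost every $\ii$ and every $\a$, using $-\log(Cr_n)\sim -\,n\,\lambda_{k+1}(\mu,\ii)$ and the pointwise bound for the numerator,
\[
\limsup_{n\to\infty}\frac{\log\pi_\a\mu\bigl(B(\pi_\a(\ii),Cr_n)\bigr)}{\log(Cr_n)}\ \le\ \limsup_{n\to\infty}\frac{-\log\pi_\a\mu(Q_n(\ii))}{-\log(Cr_n)}\ \le\ \frac{-\,s\,\lambda_{k+1}(\mu,\ii)}{-\,\lambda_{k+1}(\mu,\ii)}\ =\ s,
\]
and since $r_{n+1}/r_n=e^{o(n)}$, monotonicity of the ball mass in the radius lets one pass from the scales $Cr_n$ to all small $r$, giving $\ydiml(\pi_\a\mu,\pi_\a(\ii))\le s$. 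For $\dim_{LY}(\mu,\ii)<1$ the same chain with $k=0$ reads $\ydiml(\pi_\a\mu,\pi_\a(\ii))\le h_\mu(\ii)/(-\lambda_1(\mu,\ii))=\dim_{LY}(\mu,\ii)$. Together with Theorem~\ref{lowerestimate} this yields Theorem~\ref{mainthm}.
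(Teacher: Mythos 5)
Your outline reproduces the paper's skeleton almost exactly: the same covering of the ellipsoid $f_{\iin{n}}(B(0,R))$ by $N(\iin{n})\leq(2R)^d\alpha_1(\iin{n})\cdots\alpha_k(\iin{n})\alpha_{k+1}(\iin{n})^{-k}$ boxes of diameter $\lesssim\alpha_{k+1}(\iin{n})$, the same set $Q_n(\ii)$, the same limit computation using $\lambda_{k+1}(\mu,\ii)>-\infty$, and the same interpolation between the scales $\alpha_{k+1}(\iin{n})$. But the one step you leave open is precisely the one that carries the theorem: the pointwise a.e.\ lower bound $\mu(Q_n(\ii))\geq e^{-o(n)}\mu[\iin{n}]/N(\iin{n})$. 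You rightly note that the \emph{averaged} version is trivial by concavity of $-\log$, but your proposed upgrade to a pointwise statement --- writing $-\log\mu(Q_n(\ii))$ as an ergodic sum of conditional informations of ``the refining sequence of cube partitions'' and invoking Shannon--McMillan--Breiman as in Feng--Hu --- does not go through. The partitions $\{Q_n(\jj)\}_{\jj}$ at successive levels are \emph{not} nested: the boxes at level $n+1$ are adapted to the principal axes of $A_{\iin{n+1}}$, which are rotated relative to those of $A_{\iin{n}}$, so there is no cocycle identity to feed into an ergodic theorem. Moreover the Feng--Hu machinery, which this paper does adapt in Theorem \ref{fengtheorem}, compares preimages of balls only through the extremal singular values and can therefore only ever produce the bounds $h_{\mu}^{\pi}(\ii)/(-\lambda_1)$ and $h_{\mu}^{\pi}(\ii)/(-\lambda_d)$, never the intermediate-singular-value quantity $\dim_{LY}(\mu,\ii)$; this is exactly why the general affine case was open.

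The paper's actual mechanism for the crux is far more elementary, a Markov-plus-Borel--Cantelli count. For fixed $j$ let $A_n^j$ be the set of $\ii$ with $\mu(Q_n(\ii))\geq 2^{-\nicefrac{n}{j}}\mu[\iin{n}]/N(\iin{n})$. Each cylinder $[\iin{n}]$ contains at most $N(\iin{n})$ distinct sets $Q_n(\cdot)$, so the ``bad'' part of $[\iin{n}]$ has measure at most $N(\iin{n})\cdot 2^{-\nicefrac{n}{j}}\mu[\iin{n}]/N(\iin{n})$, whence $\mu(\IN\setminus A_n^j)\leq 2^{-\nicefrac{n}{j}}$, which is summable in $n$; hence $\mu(A^j)=1$ for $A^j=\bigcup_{N}\bigcap_{n\geq N}A_n^j$, and the loss factor $2^{-\nicefrac{n}{j}}$ contributes only $j^{-1}\log 2/(-\log\overline{\alpha})$ to the final exponent, which vanishes as $j\to\infty$. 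No entropy theory, no conditional measures, and no refinement structure is needed. Everything else in your proposal (the dismissal of the case $\dim_{LY}(\mu,\ii)\geq d$, the role of the hypothesis $\Lambda_{\mu}(s,\ii)>-\infty$ for some $s>\dim_{LY}(\mu,\ii)$, the passage from the subsequence of scales to all $r$) matches the paper; the gap is confined to, but squarely located at, the decisive estimate.
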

\begin{proof}
As mentioned in the introduction, we follow the lines of the proof of \cite[Theorem 4.3]{JPS}.

We may assume that $\dim_{LY}(\mu,\ii)< d$. Fix an integer $k$,
so that
$k\leq\dim_{LY}(\mu,\ii)<k+1$. We have that $\pi_{_a}[\iin{n}]\in
f_{\iin{n}}(B(0,R))$ for some $R\in\N$. The ellipsoid $f_{\iin{n}}(B(0,R))$ can
be covered
by a rectangular box, call it $B(\iin{n})$, with side-lengths
$2R\alpha_1(\iin{n}),\ldots,2R\alpha_d(\iin{n})$. We can cover
$B(\iin{n})$ with $N(\iin{n})$ non-overlapping ``half-open'' boxes with
side-lengths
\[
\alpha_{k+1}(\iin{n}),\ldots,\alpha_{k+1}(\iin{n}),\alpha_{k+2}(\iin{n}),\ldots,
\alpha_{d}
(\iin{n}),
\]
where
$N(\iin{n})\leq(2R)^d\alpha_{1}(\iin{n})\cdots\alpha_{k}(\iin{n})\alpha_{k+1}
(\iin{n})^{-k}$.
Let $P_n(\ii)$ be the box that contains $\pi_{\a}(\ii)$, and let
$Q_n(\ii):=[\iin{n}]\cap \pi_{\a}^{-1}(P_n(\ii))$. In other words, $Q_n(\ii)$ is
the part of the cylinder $[\iin{n}]$ that gets
projected into
$P_n(\ii)$. For fixed $j$ we define
\[
A_n^j:=\{\ii\in\IN:\mu(Q_n(\ii))\geq
2^{-\nicefrac{n}{j}}\frac{\mu[\iin{n}]}{N(\iin{n})}\}
\]
for all $n\in\N$. Now we have
\[
\mu(\IN\setminus A^j_n)
=\mu(\bigcup_{\ii\in\IN}Q_n(\ii)\setminus A^j_n)
=\sum_{Q_n(\ii)\not\subset A^j_n}\mu(Q_n(\ii))
\leq\sum_{\ii\in I^n} N(\ii)2^{-\nicefrac{n}{j}}\frac{\mu[\ii]}{N(\ii)}
= 2^{-\nicefrac{n}{j}}.
\]
Thus for the set $A^j:=\bigcup_{N\in\N}\bigcap_{n=N}^{\infty}A^j_n$ we have
\[
\mu(A^j)=\lim_{N\to\infty}\mu(\bigcap_{n=N}^{\infty} A^j_n)
%=\lim_{N\to\infty}1-\mu(\bigcup_{n=N}^{\infty}(A^j_n)^c)
%=\lim_{N\to\infty}1-\mu((\bigcap_{n=N}^{\infty} A^j_n)^c)
%=\lim_{N\to\infty}1-\mu((\bigcap_{n=N}^{\infty} A^j_n)^c)
=1-\lim_{N\to\infty}\mu\left(\bigcup_{n=N}^{\infty} (\IN\setminus A^j_n) \right)
\geq 1-\lim_{N\to\infty}\sum_{n=N}^{\infty}2^{-\nicefrac{n}{j}}
=1.
\]
By definition, for all $\ii\in A^j$, we find $M(\ii)\in\N$ such that the
inequality
\begin{comment}
\begin{equation}
\label{monta}
\begin{split}
\mu(Q_n(\ii))&\geq 2^{-\nicefrac{n}{j}}\frac{\mu[\iin{n}]}{N(\iin{n})},\\
h_{\mu}(\ii)-\varepsilon &\leq -\frac{1}{n}\log \mu[\iin{n}] \leq
h_{\mu}(\ii)+\varepsilon,\\
\lambda_l(\mu,\ii)-\varepsilon &\leq \frac{1}{n}\log \alpha_l(\iin{n}) \leq
\lambda_l(\mu,\ii)+\varepsilon
\end{split}
\end{equation}
hold for all $n\geq M(i)$ and $1\leq l\leq k+1$. These inequalities follow
directly from the definitions of $A^j$, $h_{\mu}(\ii)$ and $\lambda_l(\mu,\ii)$.
For $\gamma(\ii)>\dim_{LY}(\mu,\ii)$, we have that
\end{comment}
\begin{equation}
\label{monta}
\mu(Q_n(\ii))\geq 2^{-\nicefrac{n}{j}}\frac{\mu[\iin{n}]}{N(\iin{n})}
\end{equation}
holds for all $n\geq M(\ii)$.

We assumed that $\Lambda_{\mu}(s,\ii)>-\infty$ for some $s>\dim_{LY}(\mu,\ii)$.
This implies that
$\lambda_{l}(\mu)>-\infty$ for all $1\leq l\leq k+1$. For
$\gamma(\ii)>\dim_{LY}(\mu,\ii)$, we have
\begin{align}
\label{limit}
\lim_{n\to\infty}
\frac{\log \mu[\iin{n}]-\log N(\iin{n})}{\log\alpha_{k+1}(\iin{n-1})}
&=
\lim_{n\to\infty}
\frac{\frac{1}{n}\left( \log\mu[\iin{n}]
-\sum_{l=1}^{k}\log\alpha_l(\iin{n})-\log\alpha_{k+1}(\iin{n})^{-k}\right)}
{\frac{n-1}{n}\frac{1}{n-1}\log\alpha_{k+1}(\iin{n-1})} \nonumber\\
&\leq
\frac{h_{\mu}(\ii)+\lambda_{1}(\mu,\ii)+\dots+\lambda_{k}(\mu,\ii)}{-\lambda_{
k+1} (\mu,\ii) }+k <\gamma(\ii) 
\end{align}
for $\mu$ almost all $\ii\in\IN$. The first inequality follows by the
definition of $N(\iin{n})$ and the fact that $h_{\mu}(\ii)$ and
$\lambda_{l}(\mu,\ii)$ are finite for $1\leq l\leq k+1$ and
$0>\lambda_{k+1}(\mu,\ii)$. The second
inequality follows by \eqref{energy}, since $P_{\mu}(\gamma(\ii),\ii)<0$. In
the calculation, we have omitted the constant $(2R)^d$ from $N(\iin{n})$, since
it has no effect
on the result.

Let $r_l$ be any sequence of positive numbers converging to zero. For each $l$,
we find an integer $n_l$, so that $\sqrt{d}\alpha_{k+1}(\iin{n_l})\leq
r_l<\sqrt{d}\alpha_{k+1}(\iin{n_l-1})$. To avoid complicated notation, we only
write $n$ instead of $n_l$. We have $\ii\in Q_n(\ii)$ and
$\pi_{\a}Q_n(\ii)\subset P_n(\ii)$ and the greatest side-length of $P_n(\ii)$
is $\alpha_{k+1}(\iin{n})$.
Therefore
we have $\pi_{\a}Q_n(\ii)\subset
B(\pi_{\a}(\ii),\sqrt{d}\alpha_{k+1}(\iin{n}))$.
Using this and \eqref{monta} and \eqref{limit}, we get that
\begin{align*}
\limsup_{l\to\infty}\frac{\log \pi_{\a}\mu
B(\pi_{\a}(\ii),r_l)}{\log r_l}
&\leq\limsup_{n\to\infty}\frac{\log \pi_{\a}\mu
B(\pi_{\a}(\ii),\sqrt{d}\alpha_{k+1}(\iin{n}))}{\log\sqrt{d}\alpha_{k+1}(\iin{
n-1}
) }\\
&\leq\limsup_{n\to\infty} \frac{\log \mu Q_n(\ii)}{\log \alpha_{k+1}(\iin{n-1})}
\\
&\leq \limsup_{n\to\infty}\left( \frac{\log 2^{-\nicefrac{n}{j}}}{\log
\alpha_{k+1}(\iin{n-1})}
+\frac{\log \mu[\iin{n}]-\log N(\iin{n})}{\log \alpha_{k+1}(\iin{n-1})}
\right)\\
&\leq j^{-1}\frac{\log 2}{-\log \overline{\alpha}}+\gamma(\ii),
\end{align*}
where $\overline{\alpha}=\sup_{i\in I}\alpha_1(i)<1$. Since $j$ and the
sequence  $r_l$ were arbitrary and
$\mu(A^j)=1$ for all $j\in \N$, we have obtained
$\ydiml(\pi_{\a}\mu,\pi_{\a}(\ii))\leq \gamma(\ii)$ for $\mu$ almost all
$\ii\in\IN$.
\end{proof}

\begin{remark}
It is natural to ask, what can be said of the local dimensions, when one only
assumes $\sup_{i\in I}\alpha_1(i)\leq\overline{\alpha}<1$, and what results can
be obtained for all translations $\a$. Observe that Theorem
\ref{upperestimate} already applies to this case. By using an essentially identical
proof as the proof of \cite[Theorem 2.6]{Feng}, one can get the following estimates.

Assume that $\mu\in\mathcal{M}_{\sigma}(\IN)$, $h_{\mu}<\infty$ and $\log
\alpha_d(\iin{1})\in L^1(\mu)$. Then we have for $\mu$
almost all $\ii\in\IN$ and for all $\a\in\Q^{\N}$ that
\begin{equation}
\label{fengtheorem}
\frac{h_{\mu}^{\pi}(\ii)}{-E_{\mu}(\log \alpha_d(\iin{1})|\I)}
\leq \adiml(\pi\mu,\pi(\ii))
\leq \ydiml(\pi\mu,\pi(\ii))
\leq\frac{h_{\mu}^{\pi}(\ii)}{-E_{\mu}(\log \alpha_1(\iin{1})|\I)},
\end{equation}
where
$h_{\mu}^{\pi}(\ii)$ is the \emph{local projection entropy}
defined as in \cite[Definition 2.1]{Feng},
$m$ is so that $H(\Po_m)<\infty$, and $\I$ is the $\sigma$-algebra of
$\sigma$ invariant sets. For the definition of the conditional
expectation $E_\mu$, see \cite{Parry}.
If the index set $I$ is finite then \eqref{fengtheorem} is strictly included in \cite[Theorem 2.6]{Feng}.

The assumptions $h_{\mu}<\infty$ and $\log\alpha_d(\iin{1})\in L^1(\mu)$ 
are needed in the ergodic theorems that are used in
the proof and the number $m$ can be chosen to be the least integer for which
$H(\Po_m)<\infty$. In the finitely generated case these assumptions are
of course satisfied and $m=1$. We also note that the proof of
\cite[Theorem 6.2]{Feng}, which a more general version of \cite[Theorem 2.6]{Feng}, deals with a direct product of two IFS and the conditional measures used there are not needed to obtain \eqref{fengtheorem}.

In most cases, the upper bound in equation \eqref{fengtheorem} is not as good
as the
result of Theorem \ref{upperestimate}. However, in the exceptional case, where
Theorem \ref{lowerestimate} does not hold, the upper estimate in
\eqref{fengtheorem} might give
a better estimate since $h_{\mu}^{\pi}\leq h_{\mu}$, see
\cite[Proposition 4.1]{Feng}.
\end{remark}

\section{Pressure function and dimensions of the limit set}
In order to determine the Hausdorff dimension of the limit set $F_{\a}$,
one often considers the \textit{pressure} function defined by
\[
P(s)=\avlim{n}\log\sum_{\ii\in I^n} \phi^s(\ii).
\]
In the finitely generated
setting it is known that if $\max_{i\in I}||A_i||<\frac{1}{2}$, then the
Hausdorff dimension of $F_{\a}$ equals to the zero of the pressure for
$\mathcal{L}^{d\kappa}$ almost all $\a\in\R^{d\kappa}$, where $\kappa$ denotes
cardinality of the index set $I$, see \cite{Falconer88}. In
\cite[Theorem B]{KReeve},
Käenmäki and Reeve generalize this result for an infinitely
generated affine IFS, with the extra assumption of quasi-multiplicativity, see
\cite[(2.1)]{KReeve} for the definition. Since
their results on the Hausdorff dimension of the limit set are closely related
to our results on measures, we give some notes on this paper.

The pressure function satisfies $P(s)\geq P_{\mu}(s)$ for all
$\mu\in\mathcal{M}_{\sigma}(\IN)$ and all $s\in[0,\infty)$, see \cite[Lemma
2.2]{KReeve}. Furthermore, if the singular value function is
quasi-multiplicative and $s>s_{\infty}=\inf\{s:P(s)<\infty\}$, then there exists
an
ergodic measure $\mu_s$, called the Gibbs measure, satisfying
$P(s)=P_{\mu_s}(s)$, $h_{\mu_s}<\infty$ and $\Lambda_{\mu_s}(s)>-\infty$,
\cite[Theorems 3.5 and 3.6 and Lemma
4.2]{KReeve}. In example \ref{exsinfty} we show that $P(s)$ can be nonzero
everywhere. In this
case, any ergodic measure $\mu$ with $h_{\mu}<\infty$ satisfies
$\dim_{LY}(\mu)<s_{\infty}$. This
follows since $P(s_{\infty})\geq
P_{\mu}(s_{\infty})$ and $P_{\mu}$ is continuous from left. The next
theorem gives a necessary and sufficient condition for the existence of the zero
of the pressure function under the quasi-multiplicativity assumption.
\begin{comment}
If $s$ is so
that $P(s)=0$, then applying theorem \ref{mainthm} to measure $\mu_s$ gives
$\dimh F_{\a}\geq \diml\pi_{\a}\mu_s\geq s$.\ifdraft{\textbf{ Halutaan myös
pakkausdimensiolle yläraja lokaalilla dimensiolla. Saadanko lokaalille
dimensiolle arvio kaikilla $\pi(\ii)\in F$ ?}}{}
\end{comment}
\begin{lemma}
\label{pressure}
Suppose that the singular value function $\phi^s(\ii)$ is
quasi-multiplicative for all $0\leq s\leq d$. Then $P(s)$ is continuous and
strictly decreasing on the interval $[s_{\infty},\infty)$. Furthermore if
$P(s_{\infty})\geq 0$, then there exists a unique $s$ satisfying $P(s)=0$.
\end{lemma}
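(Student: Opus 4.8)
The plan is to realise the pressure through the partition sums $Z_n(s):=\sum_{\ii\in I^n}\phi^s(\ii)$ and to treat the three assertions separately: strict monotonicity and the behaviour at $+\infty$ follow from a crude pointwise comparison of singular value functions, the continuity is where quasi-multiplicativity enters, and the zero is then produced by the intermediate value theorem. First I would record the elementary structure. For $0\le s\le d$ the singular value function is submultiplicative, and for $s\ge d$ it is multiplicative since $\phi^s(\ii)=|\det A_{\ii}|^{s/d}$; in either case $n\mapsto\log Z_n(s)$ is subadditive, so
\[
P(s)=\lim_{n\to\infty}\tfrac1n\log Z_n(s)=\inf_{n\in\N}\tfrac1n\log Z_n(s)\in[-\infty,\infty].
\]
For $\ii\in I^n$ and $s<t$, the ratio $\phi^t(\ii)/\phi^s(\ii)$ is a product of nonnegative powers of the singular values $\alpha_l(\ii)$ whose exponents sum to $t-s$, and each $\alpha_l(\ii)\le\alpha_1(\ii)=\|A_{\ii}\|\le\overline{\alpha}^{\,n}$; hence $\phi^t(\ii)\le\overline{\alpha}^{\,n(t-s)}\phi^s(\ii)$. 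Summing over $I^n$ and passing to the limit gives
\[
P(t)\le P(s)+(t-s)\log\overline{\alpha}\qquad (s<t).
\]
Since $\log\overline{\alpha}<0$, this immediately yields that $\{s:P(s)<\infty\}$ is an interval unbounded above (so $P$ is real-valued on $(s_{\infty},\infty)$), that $P$ is strictly decreasing on $[s_{\infty},\infty)$ — at the left endpoint either $P(s_{\infty})=\infty>P(t)$, or $P(s_{\infty})<\infty$ and the displayed inequality applies — and that $P(s)\to-\infty$ as $s\to\infty$.

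The substantive part is continuity. Each $Z_n$ is finite on $(s_{\infty},\infty)$ (finiteness of $P$ there, together with sub- and quasi-multiplicativity, forces $Z_1(a)<\infty$, hence all $Z_n(a)<\infty$, for $a>s_{\infty}$), and it is continuous there: $s\mapsto\phi^s(\ii)$ is continuous and nonincreasing, so on $[a,\infty)$ the series defining $Z_n$ is dominated by the convergent series $\sum_{\ii\in I^n}\phi^a(\ii)=Z_n(a)$ and converges uniformly. Consequently $P=\inf_n\tfrac1n\log Z_n$ is upper semicontinuous on $(s_{\infty},\infty)$. For the matching lower bound I would use quasi-multiplicativity: as in \cite{KReeve} it supplies a constant $c>0$ and a finite family of connecting words for which $Z_{n+m}(s)\ge c\,Z_n(s)Z_m(s)$ up to a bounded shift in the indices, so $\log Z_n(s)$ is superadditive in $n$ up to an additive constant and $P(s)$ is recovered as a supremum over $n$ of functions continuous in $s$ on $(s_{\infty},\infty)$; thus $P$ is lower semicontinuous, hence continuous, there. (Alternatively one may quote the continuity of $P$ on $(s_{\infty},\infty)$ from \cite{KReeve}, where the Gibbs measure $\mu_s$ with $P(s)=P_{\mu_s}(s)$ is constructed for $s>s_{\infty}$.) Finally, right-continuity at $s_{\infty}$ follows from monotone convergence $Z_n(s)\uparrow Z_n(s_{\infty})$ as $s\downarrow s_{\infty}$: if $P(s_{\infty})<\infty$ then every $Z_n(s_{\infty})$ is finite and the supremum representation above holds at $s_{\infty}$ as well, which transfers the limit; if $P(s_{\infty})=\infty$ then $Z_1(s)\to\infty$, so $P(s)\ge\log(cZ_1(s))\to\infty$. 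Hence $P$ is continuous on $[s_{\infty},\infty)$.

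With these in hand, assume $P(s_{\infty})\ge0$. Then $P$ is continuous on $[s_{\infty},\infty)$ with $P(s_{\infty})\ge0$, it is finite and strictly decreasing on $(s_{\infty},\infty)$, and $P(s)\to-\infty$; the intermediate value theorem therefore yields a point $s$ with $P(s)=0$, and strict monotonicity makes it unique. (Conversely, if $P(s_{\infty})<0$ then $P<0$ throughout $[s_{\infty},\infty)$ while $P\equiv\infty$ on $[0,s_{\infty})$, so no zero exists; this is the necessity half mentioned before the statement.)

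I expect the main obstacle to be the lower-semicontinuity half of the continuity assertion. Submultiplicativity alone yields only $P=\inf_n\tfrac1n\log Z_n$, hence only upper semicontinuity, and in the infinitely generated setting there is no lower bound on the smallest singular values $\alpha_d(i)$, so there is no Lipschitz control of $P$ from below; quasi-multiplicativity is precisely the hypothesis that supplies the missing reverse partition-sum estimate. The one technical point it carries — the finite family of connecting words and the bounded shift they induce in the lengths — is already handled in \cite{KReeve}, so the remainder of the argument is routine.
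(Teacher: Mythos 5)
Your argument is correct in substance and reaches the same three milestones as the paper in the same order: the comparison $\phi^t(\ii)\le\overline{\alpha}^{\,n(t-s)}\phi^s(\ii)$ giving $P(t)\le P(s)+(t-s)\log\overline{\alpha}$ (hence strict decrease and $P(s)\to-\infty$) is exactly the estimate the paper imports from \cite{KVsubaffine}, and the final intermediate-value step is identical. Where you genuinely diverge is the continuity argument. The paper gets left-continuity from convexity of $P$ on each interval $[m,m+1]$ (again via \cite{KVsubaffine}) combined with monotonicity, and right-continuity from the fact that under quasi-multiplicativity $P$ is the pointwise supremum of the continuous pressures of finite subsystems \cite{KReeve}, again combined with monotonicity. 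You instead read upper semicontinuity off the subadditive representation $P=\inf_n\frac1n\log Z_n$ and lower semicontinuity off an approximate superadditivity $Z_{n+m}\ge c'Z_nZ_m$ extracted from quasi-multiplicativity, so that $P$ is also a supremum of continuous functions. The two routes use quasi-multiplicativity in the same place (only for the ``from below'' half, as the paper explicitly remarks), but yours is more self-contained in that it works directly with the partition sums rather than quoting convexity and the finite-subsystem approximation; the paper's route is shorter on the page because it outsources both halves.

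Two soft spots in your write-up deserve attention, though neither is fatal and both are of the same nature as what the paper itself leaves to the references. First, you assert that $P(a)<\infty$ forces $Z_1(a)<\infty$ and hence $Z_n(a)<\infty$ for all $n$; submultiplicativity alone only gives finiteness of $Z_{kN}$ from finiteness of some $Z_N$, so this step really does need the quasi-multiplicative comparison of $Z_1$ with higher-level sums (it is true, but it should be derived or cited precisely, since the continuity of each $Z_n$ on $(s_{\infty},\infty)$ and your treatment of the case $P(s_{\infty})=\infty$ both rest on it). Second, the superadditivity defect $c'$ you introduce depends on $s$ (through the quasi-multiplicativity constant and through $\max_{\mathbf{k}}Z_{|\mathbf{k}|}(s)$); for the supremum representation to yield lower semicontinuity you should note that $c'$ is locally bounded away from $0$ on $(s_{\infty},\infty)$, which is where the finiteness and continuity of the low-level $Z_n$ enter again. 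With these two points made explicit the proof is complete.
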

Käenmäki and Vilppolainen have proved a similar result, \cite[Lemma
2.1]{KVsubaffine}, and we will make use of that proof. Their lemma deals with a
finitely generated IFS, but some parts of the proof apply directly to the
infinitely generated case.
\begin{proof}[Proof of Lemma \ref{pressure}]
It is easy to see, that $P(s)$ is decreasing and thus it is finite for all
$s>s_{\infty}$. As in \cite[Lemma 2.1]{KVsubaffine}, we deduce
that for any $s>s_{\infty}$, we have
\[
P(s)-P(s+\delta)\geq -\delta\log \sup_{i\in I}\alpha_1(\ii),
\]
which gives that $P(s)$ is strictly decreasing for $s>s_{\infty}$ and that
$\lim_{s\to\infty} P(s)=-\infty$. Now we only neeed to show the continuity. By
inspecting the proof of \cite[Lemma 2.1]{KVsubaffine}, we
get that
$P(s)$ is convex on intervals $[m,m+1]$. Since $P(s)$ is also decreasing,
we get that $P(s)$ is left-continuous for all $s>s_{\infty}$. Since
$\phi^s(\ii)$ is quasi-multiplicative, $P(s)$ can be approximated pointwise by
continuous functions from
below, namely by the pressures of finite sub-systems, see \cite[Proposition
3.2]{KReeve}. Again, using the
fact that
$P(s)$ is decreasing, we get right-continuity. Especially, $P(s)$ is
right-continuous at $s_{\infty}$. Note also that quasi-multiplicativity was only
used to get the right-continuity.
\begin{comment}
Suppose now that $P(s_{\infty})=\infty$ and let $M>0$. Since
\[
 P(s)=\inf_{n\in\N}\frac{1}{n}\log\sum_{\ii\in I^n}\phi^s(\ii),
\]
we must have $\sum_{\ii\in I}\phi^{s_\infty}(\ii)=\infty$. Thus we find finite
$J\subset I$ so that
\[
 \sum_{i\in J}\phi^{s_\infty}(i)\geq 2cKe^{(K+1)M},
\]
where $c$ and $K$ are constants as in \cite[Lemma 3.1]{KReeve}. Furthermore we
find $t_0>s_{\infty}$, so that
\[
 \sum_{i\in J}\phi^{t}(i)\geq cKe^{(K+1)M}
\]
for all $t<t_0$. Now, applying \cite[Lemma 3.1]{KReeve} in the case $n=1$ gives
\begin{align*}
 \sum_{i\in J}\phi^{t}(i)
 &\leq cK\max\{1,e^{KP(t)}\}e^{P(t)}
\end{align*}
and this implies $P(t)>M$ for all $s_{\infty}<t<t_0$.
\end{comment}
\end{proof}
The lower local dimension of the Gibbs measure is also estimated in
\cite[Theorem 4.1]{KReeve}.
By Lemma \ref{pressure} and Theorem \ref{mainthm} we get the
following corollary. Due to Lemma \ref{pressure} the assumption of the existence of
$s_0$ in \cite[Theorem 4.1]{KReeve} can be relaxed to $P(s_\infty)\geq 0$.
\begin{comment}
We do not need to assume the finiteness of the Lyapunov
exponents, see Remarks \ref{final} (1).
\end{comment}
\begin{corollary}
\label{Gibbs}
Suppose that the singular value function $\phi^s(\ii)$ is
quasi-multiplicative for all $0\leq s\leq d$ and $P(s_{\infty})>0$. Then there
exists $s_0$ so that $P(s_0)=0$ and the Gibbs measure $\mu_{s_0}$ satisfies
$\Lambda_{\mu_{s_0}}(s_0)>-\infty$. If in addition
$\Lambda_{\mu_{s_0}}(s_0+\delta)>-\infty$ for some $\delta>0$, then
\[
\diml(\pi_{\a}\mu_{s_0},\pi_{\a}(\ii))=s_0,
\]
for $\mu$ almost all $\ii\in\IN$ and $m$ almost all $\a\in\Q^{\N}$.
\end{corollary}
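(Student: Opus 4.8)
The plan is to apply Theorem \ref{mainthm} to the Gibbs measure $\mu_{s_0}$ and then identify $\min\{d,\dim_{LY}(\mu_{s_0})\}$ with $s_0$. First, since $\phi^s$ is quasi-multiplicative on $[0,d]$ and $P(s_\infty)>0$, Lemma \ref{pressure} gives a unique $s_0$ with $P(s_0)=0$, and it says $P$ is continuous and strictly decreasing on $[s_\infty,\infty)$; together with $P(s_\infty)>0=P(s_0)$ this forces $s_0>s_\infty$. Because $s_0>s_\infty$, the cited results \cite[Theorems 3.5 and 3.6 and Lemma 4.2]{KReeve} provide an ergodic Gibbs measure $\mu_{s_0}$ with $P(s_0)=P_{\mu_{s_0}}(s_0)$, $h_{\mu_{s_0}}<\infty$ and $\Lambda_{\mu_{s_0}}(s_0)>-\infty$. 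This already yields the first assertion of the corollary, and is precisely why the strict inequality $P(s_\infty)>0$ (rather than $P(s_\infty)\ge 0$) is assumed.

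For the second assertion I would check the hypotheses of Theorem \ref{mainthm} for $\mu=\mu_{s_0}$. Invariance is immediate from ergodicity, $h_{\mu_{s_0}}<\infty$ is given, and we work under $\sup_{i\in I}\|A_i\|<\tfrac12$ as in that theorem. It remains to produce $s\in[0,\infty)$ with $0>P_{\mu_{s_0}}(s)>-\infty$: take $s=s_0+\delta$. Since $\mu_{s_0}$ is ergodic, $P_{\mu_{s_0}}(\cdot,\ii)=P_{\mu_{s_0}}(\cdot)$ is constant in $\ii$; the extra hypothesis $\Lambda_{\mu_{s_0}}(s_0+\delta)>-\infty$ means the (unique possible) discontinuity point of $\Lambda_{\mu_{s_0}}$ lies beyond $s_0+\delta$, so on $[0,s_0+\delta]$ the function $\Lambda_{\mu_{s_0}}$ is finite and strictly decreasing and $P_{\mu_{s_0}}=h_{\mu_{s_0}}+\Lambda_{\mu_{s_0}}$ there. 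Hence $P_{\mu_{s_0}}(s_0+\delta)\in\R$, in particular $>-\infty$, and $P_{\mu_{s_0}}(s_0+\delta)<P_{\mu_{s_0}}(s_0)=0$. Theorem \ref{mainthm} then gives $\diml(\pi_{\a}\mu_{s_0},\pi_{\a}(\ii))=\min\{d,\dim_{LY}(\mu_{s_0},\ii)\}$ for $\mu_{s_0}$-almost all $\ii$ and $\m$-almost all $\a$.

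Finally I would identify $\dim_{LY}(\mu_{s_0})=s_0$. By ergodicity $\dim_{LY}(\mu_{s_0},\ii)=\dim_{LY}(\mu_{s_0})$, which by Definition \ref{ldimension} equals $\inf\{s:P_{\mu_{s_0}}(s)<0\}$. From the previous paragraph $P_{\mu_{s_0}}$ is finite and strictly decreasing on a neighbourhood of $s_0$ with $P_{\mu_{s_0}}(s_0)=0$, so $P_{\mu_{s_0}}(s)<0$ precisely for $s>s_0$ and $P_{\mu_{s_0}}(s)>0$ for $s<s_0$; hence $\dim_{LY}(\mu_{s_0})=s_0$. Since in this setting $s_0\le d$, we get $\min\{d,\dim_{LY}(\mu_{s_0})\}=s_0$, which completes the proof.

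The only step needing genuine care is the second paragraph: exhibiting a parameter $s$ with $0>P_{\mu_{s_0}}(s)>-\infty$. This is exactly what the additional hypothesis $\Lambda_{\mu_{s_0}}(s_0+\delta)>-\infty$ buys us — without finiteness of $\Lambda_{\mu_{s_0}}$ slightly past $s_0$ one cannot feed $\mu_{s_0}$ into Theorem \ref{mainthm}. Everything else is bookkeeping with the left-continuous, strictly decreasing pressure functions from Lemma \ref{pressure} and the structure of $\Lambda_\mu$ recorded around \eqref{energy}.
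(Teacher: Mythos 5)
Your argument is correct and follows essentially the same route as the paper's own (much terser) proof: existence and uniqueness of $s_0$ from Lemma \ref{pressure}, the Gibbs measure's properties from the cited results of K\"aenm\"aki and Reeve, verification of the hypothesis $0>P_{\mu_{s_0}}(s)>-\infty$ of Theorem \ref{mainthm} at $s=s_0+\delta$ via the extra assumption $\Lambda_{\mu_{s_0}}(s_0+\delta)>-\infty$, and the identification $\dim_{LY}(\mu_{s_0})=s_0$. You spell out the steps the paper leaves implicit (in particular why $P(s_\infty)>0$ rather than $\geq 0$ is needed, and the role of the $\delta$-hypothesis, which the paper notes is only genuinely needed when $s_0$ is an integer), and like the paper you implicitly use $s_0\le d$ to drop the minimum with $d$.
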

\begin{proof}
The existence of $s_0$ is clear by Lemma \ref{pressure}. The assumption
$\Lambda_{\mu_{s_0}}(s_0+\delta)>-\infty$ is only needed when $s$ is an
integer, to ensure that we
may use Theorem \ref{mainthm}. The
result follows since
$s_0=\dim_{LY}(\mu,\ii)$.
\end{proof}

\begin{comment}
\begin{theorem}
\label{aprox}
Suppose that the singular value function $\phi^s(\ii)$ is
quasi-multiplicative for all $0\leq s\leq d$. Let $I_k\subset I$ be finite sets
so that $I_k\subset I_{k+1}$ and $I=\bigcup_{k=1}^{\infty}I_k$. Let $F_k$ and
$F$ denote the limit sets of the IFS $\{f_i\}_{i\in I_k}$ and
$\{f_i\}_{i\in I}$ respectively. Then
\[
\dimh F=\sup_{k\in\N}\dimh F_k.
\]
\end{theorem}
\begin{proof}
It is well known that $\dimh F_k=\dim_B F_k=s_k$, where $s_k$ is the unique
zero of the corresponding pressure function $P(s,I_k)$, see \cite{Falconer88}.
Also we know that $\dimh F=s_0$, where $s_0=\inf\{s:P(s)<0\}$. The relult
follows easily, since all $P(s,I_k)$ are continuous and
$P(s)=\sup_{k\in\N}P(s,I_k)$.
\end{proof}
\begin{remark}
We remark that the result of theorem \ref{aprox} is non trivial, since
$F\not\subset\bigcup_{k=1}^{\infty} F_k$ in general.
\end{remark}
\end{comment}
By \cite[Theorem B]{KReeve}, $\dimh F_{\a}=\sup\{\dimh
\pi_{\a}(J^{\N}):J\subset I \text{ is finite}\}$ for $\m$ almost all $\a$. We
do not know whether a similar approximation holds for $\dimp$ and
$\ydimb$. Recalling \cite[Theorem 10.1]{Falconer3}, one
could use Corollary \ref{Gibbs} and hope for
results on packing dimension of the limit set. The problem is that we only
know the local dimension of $\mu_{s_0}$ for almost all $\ii$ and not for
all
$\ii$. Mauldin and Urbanski have given an
example of an infinitely generated self similar set $F$ satisfying
the open set condition, for which $\dimh F<\dimp F$, see \cite[Example
5.2]{MauldinUrbanski}. On the other hand, for all finite subsystems it holds
that $\dimh \pi_{\a}(J^{\N})=\dimp \pi_{\a}(J^{\N})$, see \cite{Falconer89}.
Therefore the dimension approximation property does not hold for this, or
similar examples. Note also that $\ydimb F_{\a}=\dimp F_{\a}$ for infinitely
generated self-affine sets $F_{\a}$ by
\cite[Theorem 3.1]{MauldinUrbanski}. The following theorem gives an estimate for
the relation between Hausdorff and
packing dimensions of infinitely generated self affine sets. For $x\in F_{\a}$,
we set the notation $L_n(x)=\{f_{\ii}(x):\ii\in I^n\}$.
\begin{theorem}
Let $\{f_i\}_{i\in I}$, be an infinitely generated affine IFS. Then we have that
\[
\sup_{\stackrel{x\in F_{\a}}{n\in\N}}\{\dimh F_{\a},\ydimb L_n(x) \}\leq
\dimp F_{\a} \leq\sup_{\stackrel{x\in F_{\a}}{n\in\N}}\{s_0,\ydimb L_n(x)\},
\]
where $s_0=\inf\{s:\avlim{n}\log\sum_{\ii\in I^n}\alpha_1(\ii)^s=0\}$.
\end{theorem}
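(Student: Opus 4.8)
The plan is to invoke \cite[Theorem 3.1]{MauldinUrbanski}, which gives $\dimp F_{\a}=\ydimb F_{\a}$, so that the assertion becomes $\sup\{\dimh F_{\a},\ydimb L_n(x)\}\le\ydimb F_{\a}\le\sup\{s_0,\ydimb L_n(x)\}$. The lower bound is then immediate: $\dimh F_{\a}\le\dimp F_{\a}$ always, while $x\in F_{\a}$ together with the equivariance $f_{\ii}(\pi_{\a}(\jj))=\pi_{\a}(\ii\jj)$ gives $L_n(x)\subset F_{\a}$, so that $\ydimb L_n(x)=\ydimb\overline{L_n(x)}\le\ydimb\overline{F_{\a}}=\ydimb F_{\a}$, using that upper box dimension is unchanged by taking closures.

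For the upper bound I would first reduce to the case $n=1$. Applying the $n=1$ statement to the iterated IFS $\{f_{\ii}\}_{\ii\in I^n}$ yields the general case, because this system has the same limit set $F_{\a}$, its family of first--level images of $x$ is exactly $L_n(x)$, its contraction bound $\sup_{\ii\in I^n}\alpha_1(\ii)\le\overline{\alpha}^{\,n}$ is again $<1$, and its analogue of $s_0$ is again $s_0$, since $\avlim{m}\log\sum_{\ii\in I^{nm}}\alpha_1(\ii)^s=n\,\avlim{N}\log\sum_{\ii\in I^N}\alpha_1(\ii)^s$.

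Now fix $t>\max\{s_0,\ydimb L_1(x)\}$ and $\delta\in(0,1)$, and cut each $\ii\in\IN$ at the first prefix whose norm is $\le\delta$. This produces a (generally infinite) antichain $\mathcal S_{\delta}$ of words satisfying $\alpha_1(\ii)\le\delta<\alpha_1(\iin{|\ii|-1})$, and since $\alpha_1(\iin{m})\le\overline{\alpha}^{\,m}$ every word in $\mathcal S_{\delta}$ has length at most $N_{\delta}:=1+\lfloor\log(1/\delta)/\log(1/\overline{\alpha})\rfloor$. As $F_{\a}=\bigcup_{\ii\in\mathcal S_{\delta}}f_{\ii}(F_{\a})$ and $\diam f_{\ii}(F_{\a})\le\alpha_1(\ii)\diam F_{\a}\le\delta\diam F_{\a}$, the set $F_{\a}$ lies in the $\delta\diam F_{\a}$--neighbourhood of $\{f_{\ii}(x):\ii\in\mathcal S_{\delta}\}$; writing $\ii\in\mathcal S_{\delta}$ as a prefix $\jj=\iin{|\ii|-1}$ followed by one symbol we get $\alpha_1(\jj)>\delta$ and $f_{\ii}(x)\in f_{\jj}(L_1(x))$, so these centres lie in $\bigcup_{m\le N_{\delta}}\bigcup_{\jj\in I^m:\,\alpha_1(\jj)>\delta}f_{\jj}(L_1(x))$. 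Because $f_{\jj}$ is $\alpha_1(\jj)$--Lipschitz, $N(L_1(x),\rho)\le C\rho^{-t}$ for all $\rho\in(0,1)$ (as $L_1(x)$ is bounded and $\ydimb L_1(x)<t$), and $\#\{\jj\in I^m:\alpha_1(\jj)>\delta\}\le\delta^{-t}\sum_{\jj\in I^m}\alpha_1(\jj)^t$, one obtains, up to a constant depending only on $d$ and $\diam F_{\a}$,
\[
N(F_{\a},c\delta)\;\lesssim\;\delta^{-t}\sum_{m=0}^{N_{\delta}}\sum_{\jj\in I^m}\alpha_1(\jj)^t.
\]
Since $t>s_0$, the sums $S_m:=\sum_{\jj\in I^m}\alpha_1(\jj)^t$ are finite and satisfy $\limsup_{m}m^{-1}\log S_m\le 0$, so $\sum_{m=0}^{N_{\delta}}S_m$ is of order $\delta^{-o(1)}$; together with $N_{\delta}\sim\log(1/\delta)/\log(1/\overline{\alpha})$ this gives $\ydimb F_{\a}\le t$, and letting $t\downarrow\max\{s_0,\ydimb L_1(x)\}$ finishes the upper bound.

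The step I expect to be the main obstacle is precisely this one: the antichain $\mathcal S_{\delta}$ and each family $\{\jj\in I^m:\alpha_1(\jj)>\delta\}$ are infinite, so there is no crude bound on the number of covering pieces, and the point is that these pieces are centred on the bounded sets $f_{\jj}(L_1(x))$ while their $\alpha_1(\jj)^t$--weighted count is controlled by $\sum_{\jj\in I^m}\alpha_1(\jj)^t$, which is subexponential in $m$ exactly when $t>s_0$. This is the only place where the affine contraction estimates and the infinitude of the generating family interact; and since a cylinder image $f_{\ii}(F_{\a})$ is only known to lie inside a ball of radius $\alpha_1(\ii)\diam F_{\a}$---rather than inside a thin box governed by the whole singular value function---it is $s_0$, and not a finer pressure--type exponent, that appears, which is why the conclusion is only a two-sided estimate.
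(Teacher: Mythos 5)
Your argument is correct and is essentially the paper's own proof: the paper likewise reduces the first inequality to $\dimp F_{\a}=\ydimb F_{\a}$ via \cite[Theorem 3.1]{MauldinUrbanski} and then disposes of the upper bound by citing the proof of \cite[Lemma 2.8]{MauldinUrbanski2} with $\|f_{\ii}'\|=\alpha_1(\ii)$. What you have written out in full (the stopping-time antichain at scale $\delta$, covering $F_{\a}$ by the sets $f_{\jj}(L_1(x))$, and the $\sum_{\jj}\alpha_1(\jj)^t$ count controlled by $t>s_0$) is precisely the content of that cited lemma, so the two proofs coincide in substance.
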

\begin{proof}
We have $\dimp F_{\a}=\ydimb F_{\a}$ by \cite[Theorem 3.1]{MauldinUrbanski} and so the
first inequality is trivial. The proof of the last inequality is essentially the same
as the proof of \cite[Lemma 2.8]{MauldinUrbanski2},
since $||f_{\ii}'||=\alpha_1(\ii)$.
\end{proof}
Note that if $s_0\leq 1$, then $s_0=\inf\{s:P(s)<0\}=\dimh F_{\a}=\dimp F_{\a}$ for
$\m$ almost all $\a\in\Q^{\N}$ by \cite[Theorem B]{KReeve}.
\section{Examples and final remarks}
Here we give some examples on the
entropies and pressures of measures. In example
\ref{exmpressure} we show that the measure-theoretical pressure function can be
non-zero everywhere and in example \ref{exsinfty} we show that the
pressure function can be non-zero everywhere, as mentioned earlier. In the
examples, we make use of \textit{Bernoulli
measures}: Fix reals $0\leq p_i\leq 1$ so
that $\sum_{i=1}^{\infty}p_i=1$. The unique measure
satisfying $\mu[\iin{n}]=p_{i_1}p_{i_2}\cdots p_{i_n}$ is called a
Bernoulli measure. It is well known that Bernoulli measures are ergodic.
It is also easy to see that the entropy of a Bernoulli measure can be infinite.

\begin{example}
\label{exmpressure}
($P_{\mu}(s)\neq 0$ everywhere)
Let
$\mu$ be a Bernoulli measure with $\mu[i]=c(i+1)^{-2}$, where
$c=(\frac{\pi^2}{6}-1)^{-1}$. Let
\[
A_i=\begin{bmatrix*} 2\mu[i] &0\\0& c4^{-i} \end{bmatrix*}.
\]
We can now calculate
\[
h_{\mu}=-\sum_{i=2}^{\infty}ci^{-2}\log ci^{-2}=
\log c+2c\sum_{i=2}^{\infty}i^{-2}\log i<\infty
\]
and thus $\mu$ is a probability measure with finite entropy.
Also, by induction we see that
$
\lambda_1(\mu)=\sum_{i=1}^{\infty}\mu[i]\log2\mu[i]=\log 2-h_{\mu}
$
and
\begin{align*}
\lambda_2(\mu)
%&=\sum_{i=1}^{\infty}\mu[i]\log\alpha_2(i)\\
&=c\sum_{i=1}^{\infty}(i+1)^{-2}\log c4^{-i}=-\infty
\end{align*}
Thus $\mu$ is a Bernoulli measure with finite entropy and $P_{\mu}(t)\geq\log2$
for all $t\leq 1$ and $P_{\mu}(t)=-\infty$ for all $t>1$. Since $\sup_{\ii\in I}
\alpha_1(i)=2\mu[1]=\frac{2c}{4}<\frac{1}{4}$, Theorem \ref{lowerestimate} gives
that
$\adiml(\pi_{\a}\mu,\pi_{\a}(\ii))\geq 1$ for $\mu$ almost all $\ii\in\IN$ and
$m$
almost all $\a\in\Q^{\N}$.
\begin{comment}
Therefore $P_{\mu}(t)\neq 0$ everywhere and $\diml\pi_{\a}\mu=1$.
\end{comment}
\end{example}
\begin{example}
\label{exsinfty}
($P(s)\neq 0$ everywhere)
Let
$c_i=i^{-\frac{1}{2}}$,
$d_i=i^{-1}$ and $A_i=\begin{bmatrix}c_i & 0\\ 0 & d_i\end{bmatrix}$
for all $i\in\N$. Now
$A_{\ii}=\begin{bmatrix}c_{\ii} & 0\\ 0 & d_{\ii}\end{bmatrix}$ for all $\ii\in
I^n$, where $c_{\ii}=c_{i_1}\cdots c_{i_n}$ and $d_{\ii}=d_{i_1}\cdots d_{i_n}$.
Therefore, for all $t=1+s$, we have
\[
\phi^t(\ii)=\frac{1}{i_1^{\frac{1}{2}}}\cdots\frac{1}{i_n^{\frac{1}{2}}}
\cdot\frac{1}{i_1^s} \cdots\frac {1}{i_n^s},
\]
which implies
\[
\sum_{\ii\in I^n}\phi^t(\ii)=\left(\sum_{\ii\in\N}\frac{1}{i^{\frac{1}{2}+s}}
\right)^n\qquad\text{ and }\qquad
P(t)=\log\sum_{\ii\in\N}\frac{1}{i^{\frac{1}{2}+s}}.
\]
Choose $I=\{ \lfloor i(\log i)^2\rfloor:i\geq n_0\}$. Now we have that
\[
P(\frac{3}{2})=\log \sum_{\ii\in
I}\frac{1}{i^{\frac{1}{2}+\frac{1}{2}}}=\log\sum_{i=n_0}^{\infty}\frac{1}{
\lfloor
i(\log i)^2\rfloor}<0
\]
for $n_0$ large enough. For all $t<\frac{3}{2}$ we get $P(t)=\infty$, since
$\log i \leq i^{\delta}$ for large $i$ when $\delta>0$.
\end{example}

We end with final remarks on the assumptions and results of this paper.
\begin{remarks}
\label{final}
\begin{enumerate}
\item
Considering the proof of Theorem \ref{upperestimate}, suppose that
$\lambda_{k+1}(\mu,\ii)=-\infty$. We face difficulties at \eqref{limit} since we
are to calculate the limit
\[
\lim_{n\to\infty}\frac{\log
\alpha_{k+1}(\iin{n})}{\log\alpha_{k+1}(\iin{n-1})}.
\]
Since $\alpha_{k+1}(i)\to 0$ as $i\to\infty$, there are sequences for which the
above
limit is infinite. If one has extra information about the support of the
measure then the set of these sequences can be studied. For example, if
$\mu\left(\IN\setminus\bigcup J^{\N}\right)=0$,
where the union is over all finite sets $J\subset I$, then we find
constants $c(\ii)$ for almost all $\ii$ so that $\alpha_{k+1}(\iin{n})\geq
c(\ii)\alpha_{k+1}(\iin{n-1})$ and the set of the exceptional sequences is of
measure zero. Unfortunately these measures are rather trivial. This can be seen
from \cite[Lemma 2.3]{KVmoran}. Note that one can always use the sequence
$\alpha_{k+1}(\iin{n})$
to obtain the estimate $\adiml(\pi_{\a}\mu,\pi_{\a}(\ii))\leq
\dim_{LY}(\mu,\ii)$.
\item
Since we assumed that the limit set $F$ is bounded it is reasonable to also
assume that
$\alpha_d(i)\to 0$ as $i\to\infty$. Therefore we could have
$-\sum_{i\in
I}\mu[i]\log \alpha_d(\iin{1})=\infty$ and so the assumption
$\log \alpha_d(\iin{1})\in L^1$ in equation \eqref{fengtheorem} is necessary.
% A sufficient condition for $\log
% \alpha_d(\iin{1})$ to be integrable is that for some $c>0$ we have
% $\alpha_d(i)\geq c\mu[i]$ for all $i\in I$, since $h_{\mu}<\infty$.
% The upper estimate is of course non-trivial,
% since $0<-\sum_{i\in I}\mu[i]\log\alpha_1(i)\leq -\lambda_1(\mu)$.
\item
Considering the finitely generated case, suppose that $\#I=\kappa$ and that
$s_0$ is the zero of the
pressure function.
Käenmäki proved the existence of an equilibrium measure 
$\mu_{s_0}$ in \cite{Kmeasure}. For this measure, $\dim_{LY}(\mu_{s_0})$ equals
to
$s_0$. By Theorem \ref{mainthm}, we get that $\dimh(F)\geq s_0$ for Lebesque
almost all $\a\in\R^{d\kappa}$. This shows that
we can not remove
the assumption $\sup_{i\in I}||A_i||<\frac{1}{2}$ from Theorem
\ref{mainthm}. For examples where $\dimh(F)< s_0$, see
\cite{Edgarexample,PU,SimoSolo}. Also
there are
examples showing that for particular $\a$, Theorem \ref{mainthm} can not hold,
see e.g. \cite[Example 9.11]{Falconer2}. The size of the set of these
exceptional translation has been studied by Falconer and Miao in
\cite{FalcMiaoExceptional}. 
\item
Supposing that $h_{\mu}^{\pi}(\ii)<\infty$, we may slightly modify the
definition of the Lyapunov dimension, namely by setting
\[
\dim_{LY}^{\pi}(\mu,\ii)=\inf\{s:h_{\mu}^{\pi}(\ii)-\Lambda_{\mu}(s,\ii)<0\}.
\]
Perhaps we could have
$\diml(\pi_{\a}\mu,\pi_{\a}(\ii))=\min\{d,\dim_{LY}^{\pi}(\mu,\ii)\}$ for $\mu$
almost
all $\ii\in\IN$ and all $\a\in\Q^{\N}$, when $\mu\in\mathcal{E}_{\sigma}(\IN)$,
$h_{\mu}^{\pi}<\infty$ and
$\sup_{i\in I}||A_i||<1$.
\end{enumerate}
\end{remarks}

%%%%%%%%%%%%%%%%%%%%%%%%%%%%%%%%%%%%%%%%%%%%%%%%%
\bibliographystyle{plain}
%\nocite{*}
\bibliography{luettelo.bib}
%%%%%%%%%%%%%%%%%%%%%%%%%%%%%%%%%%%%%%%%%%%%%%%%%
\end{document}